\documentclass[11pt]{amsart}
\usepackage[parfill]{parskip}    
\usepackage{fullpage}
\usepackage{graphicx}
\usepackage{amssymb,amsmath,latexsym}
\usepackage{tikz}
\usetikzlibrary{arrows,decorations.markings,decorations.pathreplacing,calc,matrix,intersections}
\tikzset{->-/.style={decoration={markings,mark=at position #1 with {\arrow{>}}},postaction={decorate}}}

\usepackage{epstopdf}
\usepackage{ifpdf}
\usepackage{color}
\usepackage{float}
\usepackage{amscd,amsmath, mathabx}
\usepackage{amssymb,amsmath,latexsym,color,enumerate,tikz}
\usepackage[all]{xypic}       
\usepackage[varg]{pxfonts}
\usepackage{amsmath,amsthm,amsfonts,amssymb,fancyhdr,graphics,relsize,tikz-cd,mathtools,faktor,tikz,changepage}
\usepackage{graphicx}
\usepackage{placeins}

\usepackage{dsfont}
\definecolor{red}{rgb}{1,0,0} 

 \definecolor{darkgreen}{rgb}{0, .7, 0}

 \definecolor{purple}{rgb}{.7, 0, 1}

\tikzset{mynode/.style={draw,circle,fill=black,inner sep=2pt,outer sep=0.5pt}}

\newtheorem{theorem}{Theorem}[section]
\newtheorem*{theorem*}{Theorem}
\newtheorem*{lemma*}{Lemma}
\newtheorem{proposition}[theorem]{Proposition}

\newtheorem{corollary}[theorem]{Corollary}

\theoremstyle{definition}

\theoremstyle{remark}

\usepackage{hyperref}
 
\begin{document}
\title{Self-similarity of the generalized Baumslag-Solitar groups}
\author{Dessislava H. Kochloukova}
\subjclass[2020]{Primary: 20E08; Secondary:   20E26 }
\address{State University of Campinas (UNICAMP)}
\email{desi@unicamp.br}

\maketitle

\begin{abstract} We show that all residually finite generalized Baumslag-Solitar groups  of rank $n \geq 1$, defined on a finite and connected graph, are self-similar. Furthermore we prove that all residually finite fundamental groups of (finite, connected)  graph of groups where all vertex and edge groups are torsion-free and commensurable with the Heisenberg group and all edge groups properly embed in the corresponding vertex groups are self-similar.
\end{abstract}

\section{Introduction}

The classical Baumslag-Solitar groups were first defined in \cite{B-S} as an HNN-extension of an infinite cyclic group with non-trivial associated subgroups i.e. $BS(n,m) = \langle a,t \ | \ t a^n t^{-1} = a^m  \rangle$. This class of groups was generalised independently by Forester in \cite{For}  and  Whyte in \cite{Why} as fundamental group of a finite graph of groups with infinite cyclic vertex and edge groups and it was later studied by Levitt in  \cite{Lev1}, \cite{Lev2}.  This notion
was recently generalised by Button \cite{But} to the class of finite graphs of groups
with vertex and edge groups isomorphic to $\mathbb{Z}^n$  such that all edge groups have finite index in the corresponding vertex groups, he calls such groups generalised Baumslag–Solitar groups of rank $n \geq 1$. We will consider only generalised Baumslag-Solitar groups whose underlying graph is {\bf connected}. In \cite{B-I-W}  Bux, Isenrich, Wu considered    the
class of groups consisting of all (finite) graphs of groups with the property that all edge
and vertex groups are abstractly commensurable with $G$ and all edge group inclusions in
vertex groups have finite index. They called this class $BS_G$ and proved that if a group in this class acts faithfully on its Bass-Serre tree then it embeds in a finitely presented simple group.

  A self-similar group (or a state closed group) of degree $m$ is a group $G$ admitting a faithful, self-similar action on $m$-ary rooted tree. Among those groups are some well known examples as the Grigorchuk group \cite{G} and the Gupta-Sidki group \cite{Sidki}. 
Self-similar groups are always residually finite, but in general the converse does not hold. By a result of Hall in \cite{Hall} all finitely generated abelian-by-nilpotent groups are residually finite, in particular every finitely generated metabelian group is residually finite but as shown by Kochloukova and Souza in \cite{K-S} there exist finitely presented metabelian groups that are not self-similar.   Still in  \cite{K-Si} Kochloukova and Sidki  constructed   many finitely generated, metabelian, self-similar groups. In \cite{Ba-S} Bartholdi and Sunik proved that the Baumslag-Solitar group $BS(1,m)$ is self-similar for $m \not= -1$. Still we are faraway from understanding which finitely generated metabelian groups are self-similar.

We are interested which generalised Baumslag–Solitar groups of rank $n \geq 1$ or groups  from the more general class $BS_G$ are self-similar. As the groups from $BS_G$ are not necessary residually finite it is necessary first to understand which ones are residually finite. For example $BS(n,m)$ is residually finite only if $n =1$ or $m = 1$ or $n =  m$ or $-m$. The residual finiteness of the generalized Baumslag-Solitar groups of rank $n \geq 1$  was considered by Levitt in \cite{Lev2} and by Raptis, Talleli and Varsos in \cite{R-T-V}. 
 Recently  Lopez de Gamiz Zearra and Shepherd in \cite{L-S}   gave complete description of the generalised Baumslag-Solitar groups of rank $n$  that are residually finite, namely they are  ascending HNN extensions over  base group $\mathbb{Z}^n$ or are virtually $\mathbb{Z}^n$-by-free.

 Our approach to prove self-similarity is via the construction of virtual endomorphisms.
 In the case of transitive self-similar groups the connection between virtual endomorphism and self-similarity was suggested by Nekrashevych and Sidki  \cite{N-S}. The generalization of this result for (intransitive) self-similar groups was established by Dantas,  Santos and Sidki  \cite{Brazil}. 

As the fundamental groups of graphs of groups contain many free subgroups it is not surprising that in our proofs we use that any finitely generated free group is self-similar. This fact follows  from the specific construction of  an automata that generates free product  of  groups of order 2. The most general case was established by Savchuk and Vorobets  \cite{S-V}   but previously several results in this direction were proved by Glasner and Mozes   \cite{G-M}, Vorobets and Vorobets  \cite{V-V} and Steinberg,Vorobets and Vorobets \cite{SVV}.

We note that the Leary-Minasyan groups $G(A, L')$ defined in \cite{L-M} are HNN extensions with a base group $\mathbb{Z}^n$ and associated subgroups of finite index in $\mathbb{Z}^n$, thus they are generalised Baumslag-Solitar groups of rank $n$. 
 Our main result shows that for  a generalised 
  Baumslag-Solitar group of rank n residual finiteness  is sufficient for self-similarity.
   Thus the residually-finite Leary-Minasyan groups are self-similar.

   \begin{theorem} \label{main1} Let $G$ be a generalized 
  Baumslag-Solitar group of rank n.
    Then the following conditions are equivalent:
  
  i) $G$ is self-similar;
  
  ii) $G$ is residually finite;
  
  iii) $G$ is linear over $\mathbb{Q}$.
  \end{theorem}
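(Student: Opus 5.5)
The implications i) $\Rightarrow$ ii) and iii) $\Rightarrow$ ii) are immediate. Self-similar groups are residually finite, since the action on the $m$-ary tree is faithful and the level stabilizers have finite index. Finitely generated linear groups are residually finite by Mal'cev's theorem. The substance is therefore ii) $\Rightarrow$ i) and ii) $\Rightarrow$ iii). For both I would use the classification of Lopez de Gamiz Zearra and Shepherd \cite{L-S}: a residually finite generalized Baumslag-Solitar group of rank $n$ is either a strictly ascending HNN extension of $\mathbb{Z}^n$, or it is virtually $\mathbb{Z}^n$-by-free. In the second case I want the more precise form. There is a finite index subgroup $H \le G$ in which the edge inclusions become isomorphisms, so $H \cong \mathbb{Z}^n \rtimes F$ with $F$ a finitely generated free group. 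Passing to a deeper finite index subgroup if needed, the vertex group $\mathbb{Z}^n$ is normal in $G$ and $G/\mathbb{Z}^n$ is virtually free.

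\emph{Ascending case.} Here $G = \mathbb{Z}^n *_{\varphi}$ for an injective endomorphism $\varphi$ given by an integer matrix $A$ with $\det A \neq 0$, so $G \cong \mathbb{Z}[A^{-1}]^n \rtimes \langle t\rangle$. This group embeds in $\mathrm{GL}_{n+1}(\mathbb{Q})$ via $t\mapsto \mathrm{diag}(A,1)$ and translations, which gives iii). For i) I would follow Bartholdi--Sunik and the Nekrashevych--Sidki criterion \cite{N-S}. Take a finite index subgroup $K$ and a homomorphism $f\colon K\to G$ whose core (the largest $f$-invariant normal subgroup contained in $K$) is trivial. A natural choice is $K = \langle \mathbb{Z}^n, t^{k}\rangle$ or a subgroup of the form $(B\mathbb{Z}^n)\rtimes\langle t\rangle$, with $f$ conjugating by a suitable rational matrix that expands, so that the iterates of $f^{-1}$ shrink every nontrivial normal subgroup.

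\emph{Virtually $\mathbb{Z}^n$-by-free case.} For iii) I would realize $H=\mathbb{Z}^n\rtimes F$ inside $\mathrm{GL}_{n+1}(\mathbb{Z})$ only after modding out the kernel of $F\to \mathrm{GL}_n(\mathbb{Z})$. It is cleaner to use that $\mathbb{Z}^n\rtimes F$ is linear as a semidirect product of linear groups with the free factor acting through a representation: take a faithful representation of $F$ in $\mathrm{SL}_2(\mathbb{Z})$ and form a block construction. Then induce from $H$ to $G$ to get a faithful $\mathbb{Q}$-representation of $G$.

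For i) I would use the intransitive criterion of Dantas--Santos--Sidki \cite{Brazil}. It suffices to find finitely many finite index subgroups with virtual endomorphisms whose joint core is trivial. I would build these from two ingredients. First, for $F$, which is self-similar by Savchuk--Vorobets \cite{S-V}, take a virtual endomorphism $f_1$ with trivial core on a finite index subgroup. Second, for the lattice, take a virtual endomorphism on $m\mathbb{Z}^n\rtimes F_0$ given by $v\mapsto v/m$ on the lattice and identity on a finite index $F_0$ acting trivially mod $m$. The kernel of the lattice-type map in the $F$-direction is then handled by the free-group map.

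The main obstacle is making these glue on $G$ itself rather than on $H$, because self-similarity does not obviously pass to finite extensions. I would try to avoid this by choosing the subgroups inside $H$ but viewing them as virtual endomorphisms of $G$ (finite index in $G$ too, with values in $G$). I would then check that the intersection of their cores with the finite number of conjugates is trivial: a normal subgroup of $G$ in the joint core meets $\mathbb{Z}^n$ trivially because of the scaling, so it maps injectively into the virtually free quotient, where the free-group endomorphism kills it.
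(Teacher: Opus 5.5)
Your treatment of the virtually $\mathbb{Z}^n$-by-free case matches the paper's, and so do all the linearity statements.

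\begin{itemize}
\item Proposition~\ref{split1} uses exactly your pair of virtual endomorphisms: a simple virtual endomorphism of the free group composed with the projection, and division by $2$ on $(2A)\rtimes F$.
\item The paper likewise treats virtual endomorphisms of a finite-index subgroup as virtual endomorphisms of $G$. A normal subgroup of $G$ lying inside their domains is normal in the subgroup, so no conjugates need tracking.
\item Linearity comes from the affine representation plus a representation through the free quotient, followed by induction; in the ascending case it comes from $\mathbb{Q}^n\rtimes\mathrm{GL}_n(\mathbb{Q})\le\mathrm{GL}_{n+1}(\mathbb{Q})$.
\end{itemize}
A minor point: a vertex group need not itself be normal in $G$ (e.g.\ $\langle a\rangle$ in $BS(2,2)$); only a finite-index subgroup of it is. You do not need this anyway.

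The gap is ii) $\Rightarrow$ i) for a strictly ascending HNN extension $G=B\rtimes\langle t\rangle$, where $B=\bigcup_i A^{-i}\mathbb{Z}^n$ and $t$ acts by the integer matrix $A$. You list candidate domains but never define $f$ or check that its core is trivial.

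Your first candidate cannot work. Since $\bigcup_i A^{-ik}\mathbb{Z}^n=B$, the subgroup $\langle\mathbb{Z}^n,t^k\rangle=B\rtimes\langle t^k\rangle$ contains the whole base, which is normal in $G$. Any $f$ acting on the base by a matrix must then map $B$ into $B$, so it is not simple. The domain has to be a proper $t$-invariant finite-index sublattice, and $f$ has to divide rather than expand. The choice of prime also matters: for $BS(1,p)$ one has $pB=B$.

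What is needed is the following.
\begin{itemize}
\item Take a prime $p$ not dividing $\det A$. Then $B/pB\cong\mathbb{Z}^n/p\mathbb{Z}^n$, so $K=pB\rtimes\langle t\rangle$ has index $p^n$, and $f(v)=v/p$, $f(t)=t$ is a virtual endomorphism.
\item Let $L$ be normal in $G$ with $L\subseteq K$ and $f(L)\subseteq L$. Every element of $L\cap B$ is infinitely $p$-divisible in $B\subseteq\mathbb{Z}[1/\det A]^n$, so $L\cap B=1$.
\item You must also handle normal subgroups not contained in the base, which your sketch ignores. From $L\cap B=1$, $L$ centralizes $B$ and embeds in $G/B\cong\mathbb{Z}$. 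A nontrivial element $t^k b$ with $k\neq 0$ would force $A^k=I$, contradicting $|\det A|\ge 2$.
\end{itemize}

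The paper reaches the same conclusion differently, via Theorem~\ref{K-S} of Kochloukova--Sidki. It shows that the base has Krull dimension $1$ as a $\mathbb{Z}Q$-module (its annihilator is nonzero and the base is infinite), and that no nontrivial power of $t$ centralizes it.
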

 
  A group $G$ is said to be subgroup separable if any two distinct finitely generated subgroups of $G$ have distinct images in some finite quotient of $G$.
  In \cite{M-R-V2} Raptis, Talelli and Varsos showed that the  fundamental group of a graph of groups of polycyclic-by-finite groups with edge groups of finite index in the respective vertex groups is subgroup separable if and only if it is linear 
  over $\mathbb{Z}$. They show that this happens precisely when there is a normal subgroup $H$ of $G$ contained in every edge group as a subgroup of finite index. In the following result we we add more equivalent conditions to the list at the expense of requiring  more restrictions on the graph of groups.

 \begin{theorem}  \label{main2} Let $G$ be the fundamental group of a finite connected graph 
 $(\mathcal{G}, X) $ of finitely
generated free abelian groups with $G_e$ a proper subgroup of $G_{o(e)}$  and $\phi_e(G_e)$   a proper
subgroup of $G_{\tau(e)}$ for every $e \in A$. Furthermore we suppose that all vertex and edge groups have the same rank.   Then the following conditions are equivalent:
  
  i) $G$ is self-similar;
  
  ii) $G$ is residually finite;
  
  iii) $G$ is linear over $\mathbb{Z}$;
  
  iv) $G$ is  subgroup separable.
\end{theorem}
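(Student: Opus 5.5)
The plan is to close the cycle of implications i) $\Rightarrow$ ii) $\Rightarrow$ iv) $\Rightarrow$ iii) $\Rightarrow$ i). Several of these arrows are already known. A self-similar group is residually finite, since it acts faithfully on a locally finite rooted tree and the level stabilisers have finite index with trivial intersection; this gives i) $\Rightarrow$ ii). Since every edge group has the same rank as the vertex groups, all edge groups have finite index, so $G$ is a generalised Baumslag--Solitar group of rank $n$. The result of Raptis, Talelli and Varsos \cite{M-R-V2} then says that subgroup separability, linearity over $\mathbb{Z}$, and the existence of a normal subgroup $H \trianglelefteq G$ contained with finite index in every edge group are all equivalent. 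This gives iii) $\Leftrightarrow$ iv). Also iv) $\Rightarrow$ ii) is trivial, since the trivial subgroup and a cyclic subgroup $\langle g \rangle$ must be separated.

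The substantive steps are ii) $\Rightarrow$ (existence of $H$) and (existence of $H$) $\Rightarrow$ i). For the first I would use the classification of Lopez de Gamiz Zearra and Shepherd \cite{L-S}: a residually finite generalised Baumslag--Solitar group is either an ascending HNN extension over $\mathbb{Z}^n$ or virtually $\mathbb{Z}^n$-by-free. The properness hypothesis, $G_e \neq G_{o(e)}$ and $\phi_e(G_e) \neq G_{\tau(e)}$, is exactly what rules out the strictly ascending case. A strictly ascending HNN extension $\langle \mathbb{Z}^n, t \mid t a t^{-1} = \phi(a)\rangle$ would need one of the two inclusions to be an equality. So $G$ is virtually $\mathbb{Z}^n$-by-free, say $N \trianglelefteq_{f.i.} G$ with $A \trianglelefteq N$, $A \cong \mathbb{Z}^n$, and $N/A$ free. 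I then need to show that $A$ is elliptic and commensurable with every edge group. I would argue that $A$ is a normal abelian subgroup of $N$, which is non-elementary because the properness makes the Bass--Serre tree have branching at least $2$ everywhere. Hence $A$ acts trivially on the tree, so $A$ lies in every edge stabiliser. It has finite index there by rank. Intersecting the finitely many $G$-conjugates of $A$ gives the required normal $H$.

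For (existence of $H$) $\Rightarrow$ i), the plan is to build a virtual endomorphism and use the Nekrashevych--Sidki criterion \cite{N-S}, or \cite{Brazil} in the intransitive case. The group $G/H$ is the fundamental group of a finite graph of finite groups, so it is virtually free. Choose a finite-index free subgroup $F/H$ and take the preimage $K$, which fits in $1 \to H \to K \to F \to 1$ and so splits as $K \cong H \rtimes F$. The idea is to combine a self-similar structure on $\mathbb{Z}^n$ (for example via $x \mapsto 2x$ or a suitable contracting matrix commuting with the $F$-action) with the self-similar actions of free groups from \cite{S-V}. This should produce a simple virtual endomorphism of $K$, or a finite family of them as in \cite{Brazil}, whose cores are trivial. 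One then passes to $G$, using that self-similarity is inherited by finite extensions through an induced action on a tree of larger degree.

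I expect this last construction to be the main obstacle. The difficulty is making the $F$-action on $H$ compatible with the dilation on $H$, and keeping the core trivial. I would first try to replace $F$ by a finite-index subgroup acting on $H$ through a finite group, although in general the action can be infinite, for instance a unipotent action. Failing that, I would use the linearity over $\mathbb{Z}$ to realise $G$ inside $GL_m(\mathbb{Z})$, and show self-similarity via a virtual endomorphism obtained by conjugating by $p$-power congruence structures.
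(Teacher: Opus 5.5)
\textbf{The gap: $ii) \Rightarrow i)$ is not proved.} This implication is the heart of the theorem. After reducing to a finite-index subgroup $H \rtimes F$ with $H \cong \mathbb{Z}^n$ and $F$ free, you only list candidate strategies. You flag two obstacles: compatibility of the $F$-action with a dilation, and triviality of the core. Neither is a real obstacle.

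Halving is a scalar map, so it commutes with every automorphism of $H$. Hence $\mu : (2H) \rtimes F \to H \rtimes F$, the identity on $F$ and $h \mapsto h/2$ on $2H$, is a well-defined virtual endomorphism for any action of $F$, unipotent or not.

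It need not be simple: the kernel of $F \to GL_n(\mathbb{Z})$ centralises $H$ and is a $\mu$-invariant normal subgroup. The missing idea is to add a second virtual endomorphism and use the intransitive criterion, Theorem \ref{Brazil0}. Take $\beta = i \circ f \circ \pi : H \rtimes \widetilde{F} \to H \rtimes F$, where $f : \widetilde{F} \to F$ is a simple virtual endomorphism of the free group and $\pi$ kills $H$.

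Suppose $M$ is normal, lies in both domains, and is invariant under $\beta$ and $\mu$. Then $\pi(M)$ is a normal subgroup of $F$ inside $\widetilde{F}$, and $f(\pi(M)) = \beta(M) \subseteq M \cap F \subseteq \pi(M)$. By simplicity of $f$, $\pi(M) = 1$, so $M \subseteq 2H$. Then $\tfrac{1}{2}M \subseteq M$ gives $M \subseteq \bigcap_k 2^k H = 1$. This is Proposition \ref{split1}.

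Passing to $G$ is then immediate. The same two maps are virtual endomorphisms of $G$, and a normal subgroup of $G$ contained in their domains is normal in $H \rtimes F$. So no induced actions, finite-image reductions or congruence structures in $GL_m(\mathbb{Z})$ are needed.

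\textbf{Two holes in your derivation of $H$.}

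First, you rule out the strictly ascending case of \cite{L-S} because ``one inclusion would be an equality''. That argues about the given splitting, not about the abstract group. You need an isomorphism-invariant argument. For instance, a strictly ascending HNN extension of $\mathbb{Z}^n$ is solvable and not virtually polycyclic, whereas $G$ either contains a non-abelian free group or is virtually polycyclic.

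Second, properness does not force a non-elementary action. Take a single edge between two vertices with both indices $2$, e.g. $\langle a, b \mid a^2 = b^2 \rangle$. The Bass--Serre tree is a line. With $N = \langle ab, a^2 \rangle \cong \mathbb{Z}^2$, the normal subgroup $A = \langle ab \rangle$ acts hyperbolically, so your $A$ need not be elliptic. In this case $G_e$ is itself normal and serves as $H$, so the conclusion survives, but the case must be handled.

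The paper sidesteps both points by taking $H$ from Corollary \ref{res-fin1}. Residual finiteness yields $K \cong \mathbb{Z}^n$ containing every $G_v$ with finite index, together with automorphisms $\theta_e$ of $K$ extending the $\phi_e$. Then $H = sK \subseteq \bigcap_e G_e$ is $\theta_e$-invariant, hence normal in $G$.
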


   The above theorem is not true  for fundamental groups of graphs of groups where the embeddings of edge groups into vertex groups have images that are not proper subgroups. For example the classical Baumslag-Solitar group $BS(1,m)$ is metabelian, finitely generated, hence by Hall's  result \cite{Hall}  it is residually finite but for $m \not= \pm 1$  it is not linear over $\mathbb{Z}$. Still this group is self-similar by the results in \cite{Ba-S} or  \cite[Thm. C]{K-Si} or more generally by Theorem \ref{main2} and it is linear over $\mathbb{Q}$.
   
Next we  state a version of Theorem \ref{main2} when $\mathbb{Z}^n$ is substituted with the Heisenberg group $\langle x, y \ | \ [x,y] = z, [z,x] = 1 = [z,y] \rangle$.
   
\begin{theorem} \label{main3} Let $G$ be the fundamental group of a finite connected graph 
 $(\mathcal{G}, X) $ of finitely
generated torsion-free nilpotent groups with $G_e$ a proper subgroup of finite index in  $G_{o(e)}$  and $\phi_e(G_e)$   a proper 
subgroup of finite index in  $G_{\tau(e)}$ for every $e \in A$. We assume further that  one edge group ( and hence every edge group)  $G_e$ is commensurable with the Heisenberg group. Then the following statements are equivalent:

 i) $G$ is self-similar;
  
  ii) $G$ is residually finite;
  
  iii) $G$ is linear over $\mathbb{Z}$;
  
  iv) $G$ is  subgroup separable.

\end{theorem}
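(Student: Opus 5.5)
The plan is to prove the cycle of implications i) $\Rightarrow$ ii) $\Rightarrow$ iv) $\Rightarrow$ iii) $\Rightarrow$ i) (with iii) $\Rightarrow$ ii) also immediate).

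\textbf{The easy implications.} Self-similar groups are residually finite, which gives i) $\Rightarrow$ ii). Finitely generated linear groups are residually finite by Malcev, which gives iii) $\Rightarrow$ ii).

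\textbf{ii) $\Rightarrow$ iii) and iv).} All vertex and edge groups are finitely generated torsion-free nilpotent groups commensurable with the Heisenberg group, hence polycyclic, and the edge groups have finite index. So the result of Raptis, Talelli and Varsos \cite{M-R-V2} applies: iii) and iv) are each equivalent to the existence of a normal subgroup $H$ of $G$ contained in every edge group with finite index. It therefore suffices to show that residual finiteness forces such an $H$.

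Here the properness of the embeddings is essential. For Heisenberg-commensurable groups, I would use the Malcev completion $\mathcal{H}(\mathbb{Q})$. Each edge embedding induces an automorphism of the rational Heisenberg Lie algebra, and following each cycle of the graph gives a holonomy element of $\mathrm{Aut}(\mathcal{H}(\mathbb{Q}))$.

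I would argue, in the spirit of Levitt and Lopez de Gamiz Zearra--Shepherd, that residual finiteness forces every holonomy to preserve a lattice, i.e.\ to lie in a group commensurable with $\mathrm{Aut}$ of the lattice. Otherwise one of two things happens:
\begin{itemize}
\item some holonomy element has an eigenvalue that is not an algebraic unit;
\item some holonomy element strictly shrinks a lattice, giving an ascending-type conjugation.
\end{itemize}
Because the edge groups are proper subgroups, either case produces a Baumslag--Solitar-type non-Hopfian configuration, or an element conjugate to a proper power of itself in a non-residually-finite way. That contradicts ii).

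Once every holonomy is lattice-preserving, the orbit of a fixed lattice under all holonomies is finite. Intersecting these finitely many lattices gives a finite-index subgroup invariant under all holonomies. Transporting it along a maximal tree produces compatible subgroups of each $G_e$, and these assemble into the normal subgroup $H$. The main obstacle is making this contradiction argument rigorous for the non-abelian Heisenberg case. I would reduce to the abelianization $\mathbb{Z}^2$ and the center $\mathbb{Z}$, where the rank-$n$ analysis behind Theorem \ref{main2} applies, and then lift the resulting finite-index invariant sublattices back to the group.

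\textbf{iii)/iv) $\Rightarrow$ i).} Given the normal subgroup $H$, of finite index in every edge and vertex group, the quotient $G/H$ is the fundamental group of a graph of finite groups, hence virtually free. So $G$ is a finitely generated group with $H$ normal, $H$ commensurable with the Heisenberg group, and $G/H$ virtually free.

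To get self-similarity I would build a virtual endomorphism and invoke Nekrashevych--Sidki \cite{N-S}, or Dantas--Santos--Sidki \cite{Brazil} in the intransitive case. Take a finite-index subgroup $G_1 = H \rtimes F$ with $F$ free of finite rank, which exists because $G/H$ is virtually free. Then:
\begin{itemize}
\item on $H$, use a contracting-type homothety of the Heisenberg group, $x \mapsto x^k$, $y \mapsto y^k$, $z \mapsto z^{k^2}$, chosen to commute with the finite action of $F$ on $H$;
\item on $F$, use the self-similar structure of free groups \cite{S-V}, realised as a virtual endomorphism with trivial core.
\end{itemize}
Combining these gives a virtual endomorphism of $G_1$ whose core is trivial, so $G_1$ is self-similar. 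Self-similarity then passes to $G$ by the induced-action argument for finite-index overgroups, presumably already used for Theorems \ref{main1} and \ref{main2}.

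The delicate points in this last step are two. First, the action of $F$ on $H$ factors through a finite group only up to passing to a further finite-index subgroup. Second, the core has to be shown trivial, which I would do by combining triviality of the core on $H$ with triviality on the free part.
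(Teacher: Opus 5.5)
Your easy implications and overall architecture agree with the paper: produce a normal subgroup $H$ of finite index in every edge group, apply Theorem \ref{greek2} for iii) and iv), and use a finite-index subgroup $H\rtimes F$ for i). Your passage from $H$ to $H\rtimes F$ via virtual freeness of $G/H$ is fine. Both substantive steps, however, have genuine gaps.

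\textbf{The existence of $H$.} The step ii) $\Rightarrow$ existence of $H$, which you flag yourself, is not established by the holonomy sketch. The dichotomy and the resulting contradiction with residual finiteness are asserted, not derived. Conditions on individual holonomies would still have to be turned into a single lattice stabilised by the whole holonomy group. The reduction to $G_v/Z(G_v)$ presupposes that the quotient graph of groups is residually finite, and quotients of residually finite groups need not be. Lifting invariant lattices of $\mathbb{Q}^2$ and $\mathbb{Q}$ back to the Heisenberg group ignores the central component of automorphisms of the Malcev completion.

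The missing idea is that this step is exactly the Raptis--Talelli--Varsos criterion, Theorem \ref{res-fin0}. Residual finiteness gives torsion-free nilpotent overgroups $G_v^*$ in which each $\phi_e$ extends to an isomorphism of isolators. Because the edge groups have finite index, these isolators are all of $G_v^*$. So, by Corollary \ref{res-fin1}, there is one finitely generated $K$ containing every $G_v$ with finite index, together with automorphisms $\widetilde\theta_e$ of $K$ extending the $\phi_e$. Then $H=K^s\subseteq\bigcap_{e} G_e$ is characteristic in $K$, hence $\widetilde\theta_e$-invariant and normal in $G$.

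\textbf{The self-similarity step.} You assume that, after passing to a finite-index subgroup, $F$ acts on $H$ through a finite group. This is false: $F$ acts via the $\widetilde\theta_e$, which can have infinite order. For example, take one vertex $N$ and one loop with $G_e=\langle x_1^2,x_2\rangle$, where $\phi_e$ is the restriction of the automorphism $x_1\mapsto x_1^2x_2$, $x_2\mapsto x_1x_2$. This $G$ is residually finite by Corollary \ref{res-fin1} with $K=N$. The contraction $x_j^p\mapsto x_j$ is equivariant for such automorphisms only modulo the centre, so it cannot be ``chosen to commute'' with the action.

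The paper's fix (Theorem \ref{Heisenberg}) is to pass to $F_1$ normalising $N_1=\langle x_1^p,x_2^p\rangle$ and set $f(t_i)=t_iu_i$ with $u_i\in N$ solving $[x_j,u_i^{-1}]=z^{\gamma_{i,j}}$. This absorbs the central discrepancy, because inner automorphisms of the Heisenberg group realise every central twist $x_1\mapsto x_1z^{a}$, $x_2\mapsto x_2z^{b}$.

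The paper also does not merge this map with the free-group endomorphism into a single virtual endomorphism. It uses the pair $f$ and $\beta=i\circ f_0\circ\pi$ in Theorem \ref{Brazil0}: $\beta$ forces an invariant normal subgroup into $N$, and $f$ then kills it. This is why only two orbits are obtained in general.
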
 
   
   We note that there are finitely generated torsion-free nilpotent groups that are not self-similar \cite{Sa}, \cite{O}. Though a subgroup of a self-similar group is not necessary self-similar, we do not expect that the above theorem holds when Heisenberg group is substituted with an arbitrary finitely generated,  torsion-free nilpotent group.

In Theorem \ref{main1}, Theorem \ref{main2} and Theorem \ref{main3} the self-similarity is established by constructing two virtual endomorphisms or by referring to \cite[Thm. C]{K-Si} where the group is transitive self-similar. Thus in all cases the self-similar groups act on the first level of the corresponding rooted tree by at most two orbits.

    In \cite{M}  Moldovanskii proved that
ascending HNN extensions of a free nilpotent group of finite rank is residually finite.
This was later generalised by Hsu and Wise in \cite{H-W} to ascending   HNN extensions of polycyclic-by-finite groups.
Thus an ascending HNN extension of the Heisenberg group is residually finite and it is interesting to check whether such a group is self similar.

 \begin{theorem} \label{Heisenberg2}  Let $G$ be an  ascending $HNN$-extension 
 of the Heisenberg group. Then $G$ is transitive self-similar. 
\end{theorem}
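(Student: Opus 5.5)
We use the Nekrashevych--Sidki criterion: $G$ is transitive self-similar if there is a subgroup $K\le G$ of finite index and a homomorphism $f\colon K\to G$ whose \emph{core}, the largest subgroup $C\le K$ that is normal in $G$ and satisfies $f(C)\subseteq C$, is trivial. Write $G=\langle H,t \mid t h t^{-1}=\theta(h),\ h\in H\rangle$, where $H$ is the Heisenberg group and $\theta\colon H\to H$ is injective; replacing $t$ by $t^{-1}$ if needed, $\theta(H)\le H$. Then $G=N\rtimes\langle t\rangle$ with $N=\bigcup_{k\ge0}t^{-k}Ht^{k}$, the ascending union of copies of $H$. Since $\theta$ is injective, $\theta(H)$ is a subgroup of $H$ of Hirsch length $3$, hence of finite index.

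\textbf{Step 1: describe $\theta$.} Since $\theta$ induces an endomorphism of $H/Z(H)\cong\mathbb{Z}^2$ by an integer matrix $A$ with $\det A\neq0$, we have $\theta(z)=z^{\det A}$. Every element of $H$ has a normal form $x^ay^bz^c$.

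\textbf{Step 2: build the virtual endomorphism.} Choose a prime $p\nmid \det A$ and let $H_p=\langle x^p,y^p,z^{p^2}\rangle\le H$, a finite index subgroup. Define $\sigma\colon H_p\to H$ by $x^p\mapsto x$, $y^p\mapsto y$, $z^{p^2}\mapsto z$; this is an isomorphism because $[x^p,y^p]=z^{p^2}$. Conjugating $\sigma$ by a suitable automorphism $\alpha$ of $H$ (so that its action on $H/Z(H)$ is multiplication by a rational matrix $D$ with $D H_p\subseteq H_p$), we can arrange that $\sigma$ commutes with $\theta$ on a finite index $\theta$-invariant subgroup. The natural choice is $D=\frac1p I$, which commutes with any $A$; we then take $H_1=H_p\cap\theta^{-1}(H_p)$ and check that $\sigma\theta=\theta\sigma$ on $H_1$, because both are determined on generators by $A$ and the scaling. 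Now set $N_1=\bigcup_k t^{-k}H_1t^k$ and $K=N_1\rtimes\langle t\rangle$. The map $f(n t^m)=\tilde\sigma(n)t^m$ is a homomorphism, where $\tilde\sigma$ extends $\sigma$ through $t$-conjugation. This is the step I expect to be the main obstacle. One must ensure $[G:K]<\infty$. One must also ensure that $N_1$ is the correct extension, i.e.\ that $t^{-k}H_1t^k\cap H$ behaves well when $\theta(H)$ has $p$-torsion index. If this fails, I would instead take $K=H_p\cdot N'\rtimes\langle t\rangle$ and choose $p$ coprime to $[H:\theta(H)]$, so that $\theta$ induces automorphisms of all the $p$-quotients $H/H_{p^k}$. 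Then $N_1$ has index $[H:H_p]$ in $N$.

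\textbf{Step 3: trivial core.} Let $C$ be a normal $f$-invariant subgroup of $G$ inside $K$. Its image in $G/N\cong\mathbb{Z}$ is trivial, since $f$ fixes $t$ while $C\subseteq K$ is normal. Note that $t^m\in C$ with $m\neq0$ would force $[t^m,N]\subseteq C\cap N$, which is handled below. So reduce to $C\cap N$. By invariance, $C\cap N\subseteq\bigcap_j \sigma^{-j}(\ldots)$. The elements of $C\cap H$ therefore lie in $\bigcap_j H_{p^j}$, which is trivial because $H$ is residually a finite $p$-group via the $H_{p^j}$. Normality under $t$ moves any element of $C\cap N$ into $H$: conjugating by a power of $t$ sends $t^{-k}Ht^k$ into $H$. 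Hence $C\cap N=1$. Then $C$ embeds in $\mathbb{Z}$ and centralizes $N$, since $[C,N]\subseteq C\cap N=1$. A nontrivial $t^mn$ centralizing $N$ would force $\theta^m$ to be an inner action on $H$, contradicting $\det A\neq\pm1$ unless $\theta$ is an automorphism. In that case $G$ is a semidirect product $H\rtimes\mathbb{Z}$, and we instead compose $f$ with $t\mapsto t\,x$ to break centrality. Hence the core is trivial, and $G$ is transitive self-similar.
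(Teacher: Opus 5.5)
The proof breaks in Step 2, but not at the points you flagged. Invariance and finite index can be repaired: take $\bigcap_{j}\theta^{-j}(H_p)$, which is a finite intersection, and argue as in Claim 2 of the paper. What fails is the claim that $\sigma\theta=\theta\sigma$ on $H_1$ ``because both are determined by $A$ and the scaling.'' In fact $\theta$ is determined by $A$ only modulo $Z(H)$: $\theta(x)=x^{a_1}y^{b_1}z^{c_1}$ and $\theta(y)=x^{a_2}y^{b_2}z^{c_2}$ with arbitrary $c_i$. Meanwhile $\sigma$ scales $H/Z(H)$ by $1/p$ but $Z(H)$ by $1/p^2$, so it does not commute with these central parts. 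For example, take $\theta(x)=x^2$, $\theta(y)=yz$ and $p=3$. Then $y^9\in H_1$, yet $\sigma\theta(y^9)=\sigma(y^9z^9)=y^3z$ while $\theta\sigma(y^9)=\theta(y^3)=y^3z^3$. Since your $f$ fixes $t$, this commutation is exactly what $f$ needs to be a homomorphism on $K$, so $f$ is not defined.

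Conjugating $\sigma$ by an automorphism of $H$ keeps the induced map $\frac1pI$, and that cannot help in general. Take $\theta(x)=x$, $\theta(y)=y^2z$, reading exponents of $z$ in $Z(N)\cong\mathbb{Z}[\frac12]$. Then $\theta(h)=h^2z^{r}$ for every $h\in N$ with image $(0,r)$ in $N/Z(N)$. Now let $f$ satisfy $f(t)=t$ and induce $\frac1pI$ on $N/Z(N)$, hence $\frac1{p^2}$ on the centre, and let $g=y^s\neq 1$ with $g,\theta(g)$ in its domain. Then $f(\theta(g))=f(g)^2z^{s/p^2}$, whereas $\theta(f(g))=f(g)^2z^{s/p}$.

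The missing idea is the one at the heart of the paper's proof. You must allow $f(t)=tu$ with a correcting element $u$. Then ${}^{tu}x_1={}^{t}(x_1[x_1,u^{-1}])$, and similarly for $y_1$. The central terms $[x_1,u^{-1}]$ and $[y_1,u^{-1}]$ are chosen to cancel the central discrepancy between $f({}^tg)$ and ${}^{f(t)}f(g)$.

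This needs three preparations.
\begin{enumerate}
\item Claim 1 makes a power $\varphi^k$ preserve $N_1=\langle x_1^p,y_1^p\rangle$. It either replaces $x,y$ by $xz^{\alpha},yz^{\beta}$, solving a congruence mod $p$ with determinant $\det A(\det A-\mathrm{tr}A+1)$, or passes to $\varphi^k$ when $A$ has eigenvalue $1$.
\item Claim 2 passes to the finite-index subgroup $\langle N_0,t^k\rangle$.
\item The discrepancy is pulled back through $t^{-1}$, so solving for $u$ means dividing by $\det A$. This is why $p$ is chosen via Dirichlet with $2\det A\mid p-1$. Your condition $p\nmid\det A$ is used for simplicity but does not suffice here.
\end{enumerate}

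Your fallback $K=H_pN'\rtimes\langle t\rangle$ cannot work either. If $f$ restricts to $\sigma$ on $H_p$, it must send $z\in N'$ to a $p^2$-th root of $z$. No such root exists in $G$, since $N/Z(N)$ is torsion-free, $Z(N)\cong\mathbb{Z}[1/\det A]$ and $p\nmid\det A$.

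The automorphism case is not settled by ``composing with $t\mapsto tx$''; the paper handles it separately through Theorem~\ref{Heisenberg}. Your Step 3 is close to the paper's simplicity argument. That argument contracts by $1/p^2$ on the centre and by $1/p$ on $N/Z(N)$, then shows a cyclic core centralizing $N$ forces $\det A=\pm1$. It becomes usable once $f$ is correctly defined.
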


{\bf Acknowledgements} The author was partially supported by grant CNPq 305457/2021-7 and FAPESP 2024/14914-9.

\section{Preliminaries}
\subsection{Fundamental groups of graph of groups and residual finiteness}

We follow the notations from \cite{R-T-V}.
 Let  $X$ be a finite graph, where loops are allowed
 and $(\mathcal{G}, X) $ be a graph of groups. Let $A$ be an orientation of the edges $E(X)$ of $X$, we write  $o(e)$ and $\tau(e)$ for the beginning and the end of the edge $e \in A$. Thus for each vertex $v$ of the set of vertices $V(X)$ of the graph $X$, there is given a group $G_v$ , for each edge $e \in A$ there is a
subgroup $G_e$ of $G_{o(e)}$ and an isomorphism $$\phi_e: G_e \to \phi_e(G_e) \leq G_{\tau(e)}$$

Let $T$ be a maximal subtree of $X$. Then the fundamental group $G$ of the graph of groups  $(\mathcal{G}, X) $ is denoted by
$\pi(\mathcal{G}, X) $ and is defined in terms of generators and relations by
$$
 G = \pi(\mathcal{G}, X) =\langle \{ G_v \}, \{ t_e \} \ | \ v \in V(X), e \in A, rel(G_v), t_e g_e t_e^{-1} = \phi_e(g_e) \hbox{ for } g_e \in G_e, t_e = 1 \hbox{ for }e \in A \cap E(T) \rangle$$ 

A subgroup $B$ of a group $G$ is isolated if whenever $g^m \in B$ for some $m \in \mathbb{Z} \setminus \{ 0 \}$  then $g \in B$.
If $H$ is a subgroup of  $G$ we denote by  $i_G(H)$  the  isolator of $H$ in $G$, that is, the intersection of all isolated subgroups of $G$ that contain
$H$. 

\begin{theorem} \cite[Thm. 1]{R-T-V} \label{res-fin0}
Let $G$ be the fundamental group of a finite graph 
 $(\mathcal{G}, X) $ of finitely
generated torsion-free nilpotent groups with $G_e$ a proper subgroup of $G_{o(e)}$  and $\phi_e(G_e)$   a proper
subgroup of $G_{\tau(e)}$ for every $e \in A$. Then the following statements are equivalent:

\begin{itemize} 
\item G is residually finite.

\item There is a family of monomorphisms $\{ \rho_v : G_v \to G_v^* \ | \ v \in V(X) \}$ where each $G_v^*$
is a torsion-free nilpotent group and $\rho_v(G_v)$ has finite index in $G_v^*$  such that for each edge $e \in A$
there is an isomorphism
$$\phi_e^* : i_{G_{o(e)}^*} (\rho_{o(e)} (G_e)) \to  i_{G_{\tau(e)}^*} (\rho_{\tau(e)} (\phi_e(G_e)))$$
which extends $\phi_e$,
where $i_{G_{o(e)}^*} (\rho_{o(e)} (G_e))$ ( resp. $i_{G_{\tau(e)}^*} (\rho_{\tau(e)} (\phi(G_e)))$ \ ) is the isolator of $\rho_{o(e)} (G_e)$ ( resp.  $\rho_{\tau(e)} (\phi(G_e))$ \ )  in $G_{o(e)}^*$ ( resp. $G_{\tau(e)}^*$).
\end{itemize}
\end{theorem}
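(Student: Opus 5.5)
The statement is the theorem of Raptis, Talelli and Varsos, so I would reconstruct the standard argument. I would prove the two implications separately. Throughout I would use the Mal'cev completion of a finitely generated torsion-free nilpotent group $N$. This is the unique radicable torsion-free nilpotent group $N^{\mathbb{Q}}$ containing $N$ such that every element has a power in $N$. For subgroups $H\le N$, the isolator $i_N(H)$ is exactly $H^{\mathbb{Q}}\cap N$. In particular $i_N(H)$ contains $H$ with finite index, because $N$ is finitely generated nilpotent and so subgroups are finitely generated.

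\emph{(2) $\Rightarrow$ (1).} I would first replace each $G_v$ by $G_v^*$. Each $\phi_e^*$ extends $\phi_e$, so I can form the graph of groups with vertex groups $G_v^*$, edge groups $E_e^*=i_{G^*_{o(e)}}(\rho_{o(e)}(G_e))$ and maps $\phi_e^*$. The natural map from $G$ to its fundamental group $G^*$ is injective, because the vertex inclusions are injective and the amalgamation is compatible (Bass--Serre, by normal forms). Since a subgroup of a residually finite group is residually finite, it suffices to show $G^*$ is residually finite.

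In $G^*$ every edge group is isolated in its vertex groups. I would then produce, for each vertex $v$, a decreasing family of finite-index characteristic-type subgroups $K_{v,m}\trianglelefteq G_v^*$ with trivial intersection. These should be compatible: $K_{o(e),m}\cap E_e^*$ is mapped by $\phi_e^*$ onto $K_{\tau(e),m}\cap \phi_e^*(E_e^*)$. A natural choice is the subgroup generated by $m$-th powers, $(G_v^*)^{m}$, for suitable $m$ (e.g.\ $m=c!\cdot k$).

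The key point, and the main obstacle, is that for an isolated subgroup $E$ of a torsion-free nilpotent $N$ one has $N^{m}\cap E$ commensurable with, and for good choices of $m$ equal to, a verbal subgroup of $E$ preserved by any isomorphism. I would handle this via Mal'cev bases adapted to $E$: isolatedness lets one choose a Mal'cev basis of $N$ extending one of $E$, and then congruence subgroups modulo $m$ in these coordinates intersect $E$ in the corresponding congruence subgroup of $E$. Once this holds, the compatible finite quotients $G_v^*/K_{v,m}$ form a finite graph of finite groups whose fundamental group is virtually free, hence residually finite. The natural map $G^*\to\pi(\text{finite graph})$ is injective on each vertex group modulo $K_{v,m}$. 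A standard argument (Baumslag/Shirvanii style: compatible separating families of vertex groups make the tree product residually finite) then gives residual finiteness of $G^*$.

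\emph{(1) $\Rightarrow$ (2).} Assume $G$ is residually finite. For each $v$ I would take $G_v^*$ to be the subgroup of the Mal'cev completion $G_v^{\mathbb{Q}}$ generated by $G_v$ together with the isolators needed. The required $\phi_e^*$ exists provided $\phi_e$ extends to the isolators. Since $\phi_e$ extends uniquely to $\phi_e^{\mathbb{Q}}: G_e^{\mathbb{Q}}\to \phi_e(G_e)^{\mathbb{Q}}$, the content is to show that $\phi_e^{\mathbb{Q}}$ maps $i(G_e)$ onto $i(\phi_e(G_e))$ after passing to a finite-index overgroup $G_v^*$.

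Here residual finiteness is used, via the properness hypothesis. I would argue by contradiction. If $\phi_e^{\mathbb{Q}}$ sent some $x$ with $x^k\in G_e$ outside any finite extension, one can build, using stable letters and properness of edge groups, elements of $G$ conjugating $g$ to non-commensurable powers. This is a Baumslag--Solitar-type configuration $t g^{a}t^{-1}=g^{b}$ with $|a|\ne|b|$ inside $G$, which contradicts residual finiteness in the spirit of non-residually-finite $BS(2,3)$. I expect this direction, and especially getting a uniform finite-index $G_v^*$ working for all edges at once, to be the delicate part. I would address it by taking $G_v^*$ generated by $G_v$ and all the finitely many relevant isolators, checking finite index via finite generation.
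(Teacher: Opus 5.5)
The paper gives no proof of this statement (it is quoted from \cite{R-T-V} and used as a black box), so your proposal has to stand on its own. It has genuine gaps in both directions.

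\textbf{From the second condition to residual finiteness: the reduction to $G^*$ is false.} A vertexwise injective morphism of graphs of groups is injective on fundamental groups when $\rho_v(G_v)\cap i_{G_v^*}(\rho_v(G_e))=\rho_v(G_e)$, i.e.\ when $G_e$ is isolated in $G_v$. This fails in exactly the cases of interest, namely any proper $G_e$ of finite index.

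Concretely, $BS(2,2)=\langle a,t\mid ta^2t^{-1}=a^2\rangle$ satisfies the second condition with $G_v^*=\langle a\rangle$ and $\phi^*=\mathrm{id}$. Your $G^*$ is then $\langle a,t\mid tat^{-1}=a\rangle\cong\mathbb{Z}^2$, while $[t,a]\neq 1$ in $BS(2,2)$ by Britton's lemma. So you would only be proving residual finiteness of a proper quotient of $G$.

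The filtration idea can be salvaged without leaving $G$. Take $N_{v,m}=K_{v,m}\cap\rho_v(G_v)$. Compatibility along $G_e$ then follows from compatibility along the isolators, since $\phi_e^*$ extends $\phi_e$. You also need $\bigcap_m G_eN_{v,m}=G_e$, so that reduced words survive in the virtually free quotients; this holds if the $K_{v,m}$ are cofinal, by subgroup separability of finitely generated nilpotent groups.

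However, the existence of compatible $K_{v,m}$, your ``key point'', is only asserted, and the suggested choice is wrong. In the Heisenberg group $N=\langle x,y\rangle$ with $z=[x,y]$, the identity $(xy)^{2k}=x^{2k}y^{2k}z^{-k(2k-1)}$ gives $N^{2k}\cap Z(N)=\langle z^k\rangle$. A primitive cyclic $E\le\mathbb{Z}^2$, by contrast, has $(\mathbb{Z}^2)^{2k}\cap E=E^{2k}$. So for $N*_{Z(N)=E}\mathbb{Z}^2$ the families with even $m$ (e.g.\ $m=c!\,k$) are not compatible: $N^m\cap E$ is not intrinsic to $E$.

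\textbf{From residual finiteness to the second condition: you have not identified what has to be proved.} Failure of the second condition does not mean that $\phi_e^{\mathbb{Q}}$ sends some $x\in i(G_e)$ outside every finite-index overgroup; each single image lies in one. It means that no \emph{simultaneous} choice of overgroups $G_v^*$ is closed under all the $\phi_e^{\mathbb{Q}}$ and their inverses on isolators.

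Adjoining images enlarges the isolators at the other end of the edge, and around a cycle this process need not stop. For $BS(2,3)$, the successive overgroups of $\mathbb{Z}$ in $\mathbb{Q}$ acquire unbounded denominators. Finite generation of each stage says nothing about the union. That residual finiteness forces stabilization is the entire content of this implication, and it is absent.

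Your contradiction mechanism does not supply it, for two reasons. First, a relation $tg^at^{-1}=g^b$ with $|a|\neq|b|$ is compatible with residual finiteness (as in $BS(1,2)$). Second, failure of the condition need not produce such a relation inside a vertex group at all. Consider the HNN extension of $\mathbb{Z}^2$ along $\mathbb{Z}\oplus2\mathbb{Z}\to2\mathbb{Z}\oplus\mathbb{Z}$, $(x,y)\mapsto(-y,x+\tfrac{3}{2}y)$. It satisfies the hypotheses and fails the second condition, since the rational extension has trace $3/2$ and hence preserves no lattice. Yet the map has no rational eigenvector.

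What is needed is an argument that uses properness at both ends of an edge to turn the failure into a Britton-reduced element that dies in every finite quotient, as $[tat^{-1},a]$ does in $BS(2,3)$.
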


\begin{corollary} \cite[Cor. 1]{R-T-V} \label{res-fin}
Let $G$ be the fundamental group of a finite graph 
 $(\mathcal{G}, X) $ of finitely
generated free abelian groups with $G_e$ a proper subgroup of $G_{o(e)}$  and $\phi_e(G_e)$   a proper
subgroup of $G_{\tau(e)}$ for every $e \in A$. Then the following statements are equivalent:

\begin{itemize} 
\item G is residually finite.

\item There exists a finitely generated free abelian group K, a family of monomorphisms $\{ \rho_v : G_v \to K \ | \ v \in V(X) \}$ and a family of automorphisms $\{\theta_e : K \to K \ | \ e \in A \}$ such that $$\theta_e \rho_{o(e)} |_{G_e} = \rho_{\tau(e)} \phi_e |_{G_e} \hbox{ for all }e \in A.$$
\end{itemize}
 \end{corollary}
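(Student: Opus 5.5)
The plan is to derive this from Theorem \ref{res-fin0} by translating between its family of nilpotent overgroups $G_v^*$ and a single free abelian group $K$. The one structural fact needed is that isolated subgroups of a finitely generated free abelian group are exactly its direct summands. I would prove the two implications separately.

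\emph{Second condition implies residual finiteness.} Suppose $K$, the monomorphisms $\rho_v$ and the automorphisms $\theta_e$ are given.
\begin{enumerate}
\item For each vertex $v$ put $G_v^* := i_K(\rho_v(G_v))$. This is an isolated subgroup of $K$, hence a direct summand, so it is free abelian of the same rank as $G_v$. Therefore $\rho_v(G_v)$ has finite index in $G_v^*$, and $G_v^*$ is torsion-free nilpotent.
\item Since $G_v^*$ is isolated in $K$, for every subgroup $S \le G_v^*$ the isolator of $S$ in $G_v^*$ equals $i_K(S)$.
\item Any automorphism $\theta$ of $K$ satisfies $\theta(i_K(S)) = i_K(\theta(S))$.
\item Hence $\theta_e$ maps $i_{G^*_{o(e)}}(\rho_{o(e)}(G_e)) = i_K(\rho_{o(e)}(G_e))$ onto $i_K(\theta_e\rho_{o(e)}(G_e)) = i_K(\rho_{\tau(e)}\phi_e(G_e))$, and the latter is the isolator of $\rho_{\tau(e)}(\phi_e(G_e))$ in $G^*_{\tau(e)}$.
\item Define $\phi_e^*$ as the restriction of $\theta_e$ to these isolators. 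The identity $\theta_e\rho_{o(e)}|_{G_e} = \rho_{\tau(e)}\phi_e|_{G_e}$ says exactly that $\phi_e^*$ extends $\phi_e$.
\end{enumerate}
Theorem \ref{res-fin0} then gives that $G$ is residually finite.

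\emph{Residual finiteness implies the second condition.} Theorem \ref{res-fin0} provides torsion-free nilpotent groups $G_v^*$, each containing the free abelian group $\rho_v(G_v)$ with finite index, together with isomorphisms $\phi_e^*$.
\begin{enumerate}
\item Each $G_v^*$ is abelian. In a torsion-free nilpotent group, $[x^n,y^m]=1$ with $n,m \neq 0$ implies $[x,y]=1$; since $x^n, y^m \in \rho_v(G_v)$ for suitable nonzero $n,m$, all elements commute. So $G_v^* \cong \mathbb{Z}^{n_v}$.
\item Set $N := \sum_v n_v$ and $K := \mathbb{Z}^N$. Embed each $G_v^*$ into $K$ as a direct summand, for instance onto the first $n_v$ coordinates. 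Replace $\rho_v$ by its composition with this embedding; it is still a monomorphism $G_v \to K$.
\item Fix an edge $e$ and let $A_e := i_{G^*_{o(e)}}(\rho_{o(e)}(G_e))$ and $B_e := i_{G^*_{\tau(e)}}(\rho_{\tau(e)}\phi_e(G_e))$. Each is isolated in a direct summand of $K$, so each is a direct summand of $K$. Write $K = A_e \oplus C_e = B_e \oplus D_e$.
\item Because $A_e \cong B_e$ via $\phi_e^*$, the complements $C_e$ and $D_e$ are free abelian of the same rank $N - \operatorname{rank} A_e$. Choose any isomorphism $\psi_e : C_e \to D_e$ and set $\theta_e := \phi_e^* \oplus \psi_e$, which is an automorphism of $K$.
\item The extension property $\phi_e^* \rho_{o(e)}|_{G_e} = \rho_{\tau(e)}\phi_e$ gives $\theta_e\rho_{o(e)}|_{G_e} = \rho_{\tau(e)}\phi_e|_{G_e}$, as required.
\end{enumerate}

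The main point needing care is step 3 of this direction: the isolators must be direct summands of the whole of $K$, not merely of $G_v^*$. This is what lets $\phi_e^*$ extend to an automorphism of $K$, and it holds because a direct summand of a direct summand is a direct summand.
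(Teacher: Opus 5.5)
Your proof is correct in both directions. You handle the two points that matter. First, $\rho_v(G_v)$ need not have finite index in $K$, so you pass to $i_K(\rho_v(G_v))$ to meet the finite-index requirement of Theorem \ref{res-fin0}. Second, the torsion-free nilpotent overgroups $G_v^*$ supplied by Theorem \ref{res-fin0} are automatically abelian, by uniqueness of roots.

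The paper gives no proof of this statement; it is quoted from \cite{R-T-V}. The nearest argument in the paper is the proof of Corollary \ref{res-fin1}, which also reduces to Theorem \ref{res-fin0}, but with finite-index edge groups. There every isolator $i_{G^*_{o(e)}}(\rho_{o(e)}(G_e))$ is all of $G^*_{o(e)}$. So the $\phi_e^*$ are already isomorphisms $G^*_{o(e)}\to G^*_{\tau(e)}$. Connectivity of $X$ then identifies every $G_v^*$ with a single group $K$ via isomorphisms $\alpha_v$, and one sets $\widetilde{\theta}_e=\alpha_{\tau(e)}\phi_e^*\alpha_{o(e)}^{-1}$.

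In your setting that identification is unavailable: the isolators can be proper subgroups and the vertex ranks can differ. You replace it by embedding every $G_v^*$ as a direct summand of one $\mathbb{Z}^N$ and extending $\phi_e^*$ by an arbitrary isomorphism between complements. This needs no connectivity and copes with unequal ranks. Its cost is that it depends on isolated subgroups of free abelian groups being direct summands. That step has no direct analogue for nilpotent vertex groups, which the paper sidesteps in Corollary \ref{res-fin1} by assuming finite-index edge groups. A minor remark: $N=\max_v n_v$ already suffices in place of $\sum_v n_v$.
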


 \begin{theorem}\cite{M-R-V2} \label{greek2}
 Let $(\mathcal{G}, X) $ be a finite graph of groups with polycyclic-by-finite vertex groups. Suppose all edge groups are of finite index in the respective vertex groups. Then the following statements are equivalent  for the fundamental group $G$ of the graph of groups   $(\mathcal{G}, X) $ :
 
 1) $G$ is subgroup separable;
 
 2) there is a normal subgroup $H$ of $G$ contained in every edge group as a subgroup of finite index;
 
 3) $G$ is $\mathbb{Z}$-linear.
 \end{theorem}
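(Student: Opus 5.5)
The plan is to prove the cycle $2)\Rightarrow 3)$, $3)\Rightarrow 2)$, $2)\Rightarrow 1)$, $1)\Rightarrow 2)$. Both backward implications will go through a single weaker property:
\[
(\ast)\quad \text{every edge group } G_e \text{ and all of its finite index subgroups are separable in } G.
\]
The common first observation is this. Since every edge group has finite index in the adjacent vertex groups, the Bass--Serre tree of $(\mathcal{G},X)$ is locally finite and $G$ acts cocompactly on it. Consequently any two edge groups, and all their conjugates, are commensurable: they are stabilizers of edges joined by a path, and passing through a vertex loses only finite index. So a fixed edge group $K=G_e$ is a commensurated subgroup of the finitely generated group $G$.

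For $2)\Rightarrow 3)$, let $H$ be the given normal subgroup. It has finite index in each edge group, hence in each vertex group. Thus $G/H$ acts on the Bass--Serre tree with finite vertex stabilizers, so $G/H$ is the fundamental group of a finite graph of finite groups and is virtually free. Choose a free subgroup $F$ of finite index in $G/H$ and let $G_0$ be its preimage. Then $G_0 = H \rtimes F$ splits, because $F$ is free. Let $\alpha : F \to \mathrm{Aut}(H)$ be the conjugation action. I map
\[
G_0 \to (H \rtimes \mathrm{Aut}(H)) \times F,\qquad (h,f)\mapsto ((h,\alpha(f)),f),
\]
which is injective. By Auslander's theorem, $H \rtimes \mathrm{Aut}(H)$ is $\mathbb{Z}$-linear for $H$ polycyclic-by-finite, and free groups are $\mathbb{Z}$-linear. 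Hence $G_0$ is $\mathbb{Z}$-linear, and inducing the representation from $G_0$ to $G$ gives a faithful $\mathbb{Z}$-representation of $G$. For $2)\Rightarrow 1)$ I would use the same decomposition of the finite index subgroup $G_0=H\rtimes F$. A finitely generated subgroup $S$ meets $H$ in a polycyclic group and maps onto a finitely generated subgroup of $F$. I would combine separability in polycyclic groups (Mal'cev) with Hall's LERF property for free groups, passing to finite index subgroups of $H$ that are characteristic, hence $F$-invariant, to separate $S$. I expect this to be routine but fiddly.

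For $3)\Rightarrow(\ast)$: in a finitely generated $\mathbb{Z}$-linear group, polycyclic-by-finite subgroups are closed in the profinite topology. I would argue this via the congruence topology on $GL_n(\mathbb{Z})$ together with Segal's result that solvable subgroups of $GL_n(\mathbb{Z})$ are closed in the congruence topology, possibly after passing to finite index. Property $1)\Rightarrow(\ast)$ is immediate.

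The heart of the proof, and the main obstacle, is $(\ast)\Rightarrow 2)$. My approach is to compare two topologies on $G$. The first is the profinite topology. The second has as neighbourhood basis of the identity the finite index subgroups of $K$ and their commensurable conjugates; this is the Schlichting-type topology attached to the commensurated subgroup $K$. Property $(\ast)$ says every finite index subgroup of $K$ is closed in the profinite topology. I would first show that $K$ is open in the profinite topology, i.e. contains $K\cap N$ for some finite index normal $N$; this comes from separating the finitely many double cosets $KgK$ corresponding to the finitely many $G$-orbits of edges at one vertex. I would then set $H=\bigcap_{g\in G} gK_1g^{-1}$ for a suitable finite index $K_1\le K$. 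This is a finite intersection up to commensurability, by cocompactness, so $H$ has finite index in $K$ and is normal. The delicate point is showing that only finitely many distinct conjugates of $K_1$ occur. For this I would try first to exploit that a polycyclic-by-finite group has only finitely many subgroups of each given index, which bounds the conjugates once their indices are controlled through the tree.
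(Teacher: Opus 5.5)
The paper does not prove this statement; it is quoted from \cite{M-R-V2}, so your proposal has to stand on its own. Your implications $2)\Rightarrow 3)$, $2)\Rightarrow 1)$ and $1),3)\Rightarrow(\ast)$ are acceptable sketches. For $2)\Rightarrow 1)$, reduce modulo a characteristic finite-index $M\le H$; then $(H/M)\rtimes F$ is virtually free and hence LERF.

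The step you call the heart, $(\ast)\Rightarrow 2)$, is not proved. Two things go wrong there.

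\emph{First}, ``a finite intersection up to commensurability, by cocompactness'' does not follow. Cocompactness gives finitely many $G$-orbits of edges, not finitely many distinct edge stabilizers. An infinite intersection of pairwise commensurable subgroups can be trivial. In $BS(1,2)=\langle a,t\mid tat^{-1}=a^2\rangle$, which satisfies all the hypotheses, $\bigcap_{n\ge 0}t^n\langle a\rangle t^{-n}=\bigcap_n\langle a^{2^n}\rangle=1$.

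\emph{Second}, your remedy via ``finitely many subgroups of each index'' is circular. It needs a bound on $[K_1:K_1\cap gK_1g^{-1}]$ that is uniform in $g$. Such a bound is equivalent to $K$ acting on the Bass--Serre tree through a finite group, i.e.\ to 2) itself, and nothing in your sketch extracts it from $(\ast)$. Also, ``$K$ contains $K\cap N$'' is vacuous as written.

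The missing idea is that the meaningful version of your first step already finishes the argument. Each $G_e$ and $\phi_e(G_e)$ is separable and of finite index in its adjacent vertex group. Separate its finitely many nontrivial cosets there. This gives a single finite-index $N\trianglelefteq G$ with $N\cap G_{o(e)}\subseteq G_e$ and $N\cap G_{\tau(e)}\subseteq \phi_e(G_e)$ for every $e\in A$.

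Now use the tree $T$. Since $N\cap G_v\trianglelefteq G_v$, it lies in every $G_v$-conjugate of these subgroups. These conjugates are exactly the stabilizers of the edges of $T$ at the vertex $G_v$, so $N\cap G_v$ fixes all of them. Translating by $G$ (using $N\trianglelefteq G$), the $N$-stabilizer of each vertex fixes all incident edges. Hence adjacent vertices have the same $N$-stabilizer, and by connectedness of $T$ all $N$-vertex stabilizers coincide.

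Therefore $H:=N\cap G_v=N\cap\ker(G\to\mathrm{Aut}\,T)$ is independent of $v$ and normal in $G$. It lies in every edge group, and $[G_e:H]=[G_e:N\cap G_e]\le[G:N]$. This proves $(\ast)\Rightarrow 2)$ using only separability of the edge groups themselves, with no Schlichting topology and no counting of conjugates.
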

 
 The classification of the generalized Baumslag-Solitar groups of rank $n$ that are residually finite was
 recently completed by
  Lopez de Gamiz Zearra and  Shepherd \cite{L-S}.
 
 \begin{theorem} \cite{L-S} \label{L-S}
  Let $G$ be a rank $n$ generalised Baumslag-Solitar group. Then $G$ is residually finite if and only if either
$G$ is an ascending HNN extension with a base group $\mathbb{Z}^n$ or
$G$  is virtually $\mathbb{Z}^n$-by-free. \end{theorem}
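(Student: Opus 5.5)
Since Theorem \ref{L-S} is quoted from \cite{L-S}, here is only a sketch of how I would prove it with the tools above. I treat the two implications separately and reduce the hard one to Corollary \ref{res-fin}.

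For sufficiency there are two cases.
\begin{itemize}
\item If $G$ is an ascending HNN extension with base group $\mathbb{Z}^n$, then $G\cong \mathbb{Z}[1/d]^n\rtimes\mathbb{Z}$, where $d$ is the determinant of the defining endomorphism. This group is finitely generated and metabelian, hence residually finite by Hall's theorem \cite{Hall}.
\item If $G$ has a finite index subgroup $G_0=N\rtimes F$ with $N\cong\mathbb{Z}^n$ and $F$ free, let $1\neq g\in G_0$. If $g\notin N$, separate its image in the residually finite group $F$. If $g\in N$, use the characteristic quotient $N/mN$ with $m$ large: the finite group $(N/mN)\rtimes F$ embeds in $(N/mN)\rtimes \mathrm{Im}(F\to GL_n(\mathbb{Z}/m))\times F'$, where $F'$ is a finite quotient of $F$. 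This shows $G_0$, and hence $G$, is residually finite.
\end{itemize}

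For necessity, assume $G$ is residually finite and is not an ascending HNN extension of $\mathbb{Z}^n$. First I would normalise the graph of groups. If some edge $e$ of a maximal tree has $G_e=G_{o(e)}$ or $\phi_e(G_e)=G_{\tau(e)}$, I collapse $e$; this does not change $G$. After finitely many collapses, every remaining edge with a non-proper inclusion is a loop. If a loop is non-proper on one side but proper on the other, then either the graph is a single vertex with that loop, which is the excluded ascending case, or there are further edges and $G$ contains $BS(1,m)$-like ascending pieces. In that second case the standard argument (a strictly increasing chain of conjugates of a vertex group) produces a non-residually-finite configuration, or in the language of Theorem \ref{res-fin0} a contradiction with the existence of the isolator-compatible isomorphisms. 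Loops that are non-proper on both sides are harmless: they are automorphism loops and can be handled by hand, since they just add a $\mathbb{Z}^n\rtimes\mathbb{Z}$ factor. I expect this reduction to the hypotheses of Corollary \ref{res-fin} to be the main obstacle, because it is exactly where ascending behaviour must be ruled out.

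With all inclusions proper, Corollary \ref{res-fin} gives a lattice $K\cong\mathbb{Z}^n$, embeddings $\rho_v:G_v\to K$ of finite index and automorphisms $\theta_e\in GL(K)$ with $\theta_e\rho_{o(e)}=\rho_{\tau(e)}\phi_e$ on $G_e$. Choose $d$ with $dK\subseteq\rho_v(G_e')$ for every vertex and edge group; there are finitely many, all of finite index. Put $L_v=\rho_v^{-1}(dK)$. Since $\theta_e(dK)=dK$, we get $\phi_e(L_{o(e)})=L_{\tau(e)}$ and $L_{o(e)}\subseteq G_e$. Hence the $L_v$ patch together into a subgroup $H\cong\mathbb{Z}^n$ which is normalised by every $G_v$ (abelian) and every $t_e$, so $H\trianglelefteq G$. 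Then $G/H$ is the fundamental group of a finite graph of finite groups $G_v/L_v$, hence virtually free. Taking a free finite index subgroup of $G/H$ and its preimage shows that $G$ is virtually $\mathbb{Z}^n$-by-free.
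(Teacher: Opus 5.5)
The paper gives no proof of this statement; it is quoted from \cite{L-S}, so your sketch has to stand on its own. Sufficiency is fine. In the ascending case you only need that $\bigcup_i t^{-i}\mathbb{Z}^n t^i$ is abelian; it need not be $\mathbb{Z}[1/d]^n$, as $\varphi=\mathrm{diag}(2,1)$ shows. Your last paragraph is also correct, and it is the same device as the normal subgroup $H=K^s$ in the proof of Theorem \ref{main3}, with one correction: Corollary \ref{res-fin} does not give $\rho_v(G_v)$ of finite index in $K$, so you need Theorem \ref{res-fin0}, as in Corollary \ref{res-fin1}. The real content of \cite{L-S} is your reduction step, and there the sketch has two genuine gaps.

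\textbf{First gap: ascending loops with further edges.} You assert, but do not prove, that a strictly ascending loop $t$ at $v$ (with $tAt^{-1}=\varphi(A)\subsetneq A$) plus any further edge destroys residual finiteness. Your fallback appeal to Theorem \ref{res-fin0} is not available, because that theorem assumes every inclusion is proper. What is missing is the following.
\begin{itemize}
\item[(a)] For every finite quotient $\pi$ we have $\pi(\varphi(A))\subseteq\pi(A)$ with equal orders. Hence $\pi(B)=\pi(A)$ for $B=\bigcup_i t^{-i}At^i$.
\item[(b)] You then need an explicit witness. Suppose $e$ is a tree edge from $v$ to $w\neq v$, with $C=G_e$ of index $m$ in $A$. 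Take $y\in G_w\setminus\phi_e(C)$, which centralizes $C$. Take $b\in B$ with $b^m\notin A$; it exists because $B/A$ is infinite. Then $\pi(b^m)\in\pi(C)$, so $[y,b^m]$ dies in every finite quotient, while it is nontrivial by the normal form theorem.
\item[(c)] Further loops $s$ at $v$ need analogous witnesses. Examples are $[sb^ms^{-1},a]$ with $a\notin\phi_s(G_s)$, or $[sbs^{-1},b]$ when $s$ is an automorphism loop.
\end{itemize}

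\textbf{Second gap: automorphism loops.} These loops are not harmless and do not split off a $\mathbb{Z}^n\rtimes\mathbb{Z}$ factor. $G$ is an HNN extension of the rest, and for your $H$ to be normal you need a single lattice invariant under all $\theta_e$ and also under all the loop automorphisms $\psi_s$. This is a real constraint, as the following example shows.
\begin{itemize}
\item[(a)] Take $A=\mathbb{Z}^2$ and a loop with $G_e=2\mathbb{Z}^2$ and $\phi_e=M=\begin{bmatrix}1&1/2\\0&1\end{bmatrix}$. This group is residually finite by Corollary \ref{res-fin}, with $K=\mathbb{Z}\times 2\mathbb{Z}$, $\rho=2\,\mathrm{id}$ and $\theta=M$.
\item[(b)] Now add an automorphism loop $\psi=\begin{bmatrix}1&0\\1&1\end{bmatrix}$. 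Since $\mathrm{tr}(M\psi^{-1})=3/2\notin\mathbb{Z}$, no lattice is invariant under both $M$ and $\psi$, so no such $H$ exists.
\item[(c)] The resulting group is in fact not virtually $\mathbb{Z}^2$-by-free. A normal $\mathbb{Z}^2$ in a finite-index subgroup would act trivially on the Bass--Serre tree, and so would produce exactly such a lattice.
\end{itemize}
You therefore have to prove that residual finiteness forces a common invariant lattice even when automorphism loops are present. Neither Theorem \ref{res-fin0} nor Corollary \ref{res-fin} does this, since both require proper inclusions.
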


\subsection{ Self-similar groups and virtual endomorphisms}
Let $\mathcal{T}_m$ be the one rooted $m$-ary tree, i.e. the tree starts with a unique root and every vertex has  $m$ descendents. Let $\mathcal{T}_m^{(0)}, \ldots$,$\mathcal{T}_m^{(m-1)}$  denote the $m$-ary subtrees of $\mathcal{T}_m$ that start at the vertices in the first layer (level) of the tree  $\mathcal{T}_m$. Let $G$ be a group acting on the tree in the way it preserves descendence relation. For every $g \in G$ 
\begin{equation} \label{decomposition} 
g = (g_0, \ldots, g_{m-1}) \sigma
\end{equation}
where $\sigma$ is an element of the  symmetric group $S_m$ that describes the action of $g$ on the first layer (level) of the tree $\mathcal{T}_m$ and each $g_i$ acts on $\mathcal{T}_m$ by fixing its root and all vertices outside the tree  $\mathcal{T}_m^{(i)}$.
By definition the states of the element $g$ are $g_0, \ldots, g_{m-1}$.

 A group $G$ is self-similar if for every $g \in G$ all the states of $g$  belong to $G$ i.e. $G$ is state closed. We say that $G$ is a transitive self-similar group if it acts transitively on the first layer (level) of the tree  $\mathcal{T}_m$. 

A virtual endomorphism is a group homomorphism $$f : H \to G,$$ where $1 <[G : H] < \infty$. Note that $f$ is not required to be injective.  The virtual endomorphism $f$ is called {\bf simple} if whenever $K$ is a normal subgroup  of $G$ such that $K \subseteq H$ and $f(K) \subseteq K$ then $K$ is the trivial group.

\begin{theorem} \cite{N-S}\label{virt-end}  A group
$G$ is a transitive self-similar if and only if there is a simple virtual endomorphism   $f : H \to G$.
\end{theorem}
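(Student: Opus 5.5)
This is the Nekrashevych--Sidki criterion (Theorem \ref{virt-end}); it is quoted from \cite{N-S}, but here is how I would prove it. The plan is to pass between transitive self-similar actions on $\mathcal{T}_m$ and virtual endomorphisms through the stabilizer of a first-level vertex and the state map at that vertex. Simplicity of $f$ will turn out to be exactly faithfulness of the action.

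\emph{Self-similarity gives a simple virtual endomorphism.} Suppose $G$ acts faithfully, transitively and self-similarly on $\mathcal{T}_m$ with $m \ge 2$. Let $H = \mathrm{Stab}_G(0)$ be the stabilizer of the first-level vertex $0$. By transitivity, $[G:H] = m$, so $1 < [G:H] < \infty$.
\begin{itemize}
\item Define $f : H \to G$ by $f(h) = h_0$, using the decomposition (\ref{decomposition}). Self-similarity gives $h_0 \in G$.
\item Since $h$ fixes $0$, we have $(hk)_0 = h_0 k_0$ for $h,k \in H$, so $f$ is a homomorphism.
\item For simplicity, take a normal subgroup $K \le H$ of $G$ with $f(K) \subseteq K$. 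Normality together with transitivity shows that $K$ fixes the whole first level.
\item For any $k \in K$ and any vertex $i$, choose $g$ with $g(0)=i$. Then the state of $k$ at $i$ equals $f(g^{-1}kg)$ up to conjugation by a state of $g$. This state again lies in $K$, because $K$ is normal and $f(K) \subseteq K$.
\item By induction on the level, every $k \in K$ fixes every level of the tree. Faithfulness then forces $K = 1$.
\end{itemize}

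\emph{A simple virtual endomorphism gives self-similarity.} Let $f : H \to G$ be simple with $m = [G:H]$. Fix a transversal $t_0 = 1, t_1, \dots, t_{m-1}$, so that $G = \bigsqcup_i t_i H$. For $g \in G$ and each $i$, write $g t_i = t_{\sigma(i)} h_i$ with $h_i \in H$. Then define recursively
$$g = \bigl(f(h_0), \dots, f(h_{m-1})\bigr)\sigma,$$
indexing the states appropriately. This recursion defines an action of $G$ on $\mathcal{T}_m$.
\begin{itemize}
\item That it is an action follows from the cocycle identity for the $h_i$'s, checked level by level.
\item It is transitive on the first level, since $G$ acts transitively on the cosets $G/H$.
\item It is state closed by construction, since every state is some $f(h_i) \in G$.
\end{itemize}

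\emph{Faithfulness.} The kernel $K$ of the action is normal in $G$. It fixes the first level, so $K \subseteq \bigcap_g H^g \subseteq H$. The state of $k \in K$ at $0$ is $f(k)$, which also acts trivially, so $f(K) \subseteq K$. Simplicity then gives $K = 1$.

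\emph{Main obstacle.} The delicate point is verifying that the recursive formula really is a well-defined action. The composition rule $(gg')_i = g_{\sigma'(i)} g'_i$ has to be checked to match the transversal bookkeeping, and I would do this by induction on the level. The rest is formal.
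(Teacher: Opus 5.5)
The paper does not prove this statement; it quotes it from \cite{N-S}. Your argument is the standard Nekrashevych--Sidki proof and is correct: in one direction you take the stabilizer of a first-level vertex together with the state map at that vertex, and in the other you use the coset-transversal recursion $g t_i = t_{\sigma(i)} h_i$, whose kernel is exactly the largest normal subgroup $K\le H$ with $f(K)\subseteq K$. The only implicit points are that you need $m \geq 2$ so that $1<[G:H]$, and that when you write the state of $k$ at $i$ as a conjugate of $f(g^{-1}kg)$ by a state of $g$, you are using self-similarity (that this state of $g$ lies in $G$) to conclude it lies in the normal subgroup $K$.
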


 Note that by \cite{Brazil2} $\mathbb{Z} \wr \mathbb{Z}$ is not transitive self-similar i.e. it does not admit a simple virtual endomorphism.
There is a generalization of Theorem \ref{virt-end} to intransitive actions given in the following result. 

\begin{theorem} \cite{Brazil} \label{Brazil0} A group
$G$ is a self-similar acting with $k$ orbits on the first level of $\mathcal{T}_m$ if and only if there are virtual endomorphisms   $f_i : H_i \to G$ for $ 1 \leq i \leq k$ such that whenever $K$ is a  normal subgroup of $G$ such that $K \subseteq \cap_{ 1 \leq i \leq k} H_i$ and $f_i(K) \subseteq K$ for $ 1 \leq i \leq k$ then $K$ is the trivial group.
\end{theorem}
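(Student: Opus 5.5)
The plan is to prove the two implications separately, following the transitive case of Theorem \ref{virt-end}. Throughout, the virtual endomorphisms are the \emph{restriction-to-states} maps at representatives of the orbits on the first level.

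\textbf{Self-similarity implies the virtual endomorphisms.} Suppose $G$ acts faithfully and self-similarly on $\mathcal{T}_m$ with $k$ orbits $O_1,\dots,O_k$ on the first level.
\begin{enumerate}
\item Choose a vertex $x_i\in O_i$ and put $H_i=\mathrm{Stab}_G(x_i)$, so that $[G:H_i]=|O_i|<\infty$.
\item Define $f_i:H_i\to G$ by $f_i(h)=h|_{x_i}$, the state of $h$ at $x_i$. This lands in $G$ by state-closedness.
\item $f_i$ is a homomorphism because $(h_1h_2)|_{x_i}=h_1|_{h_2x_i}\,h_2|_{x_i}=h_1|_{x_i}h_2|_{x_i}$ when $h_2$ fixes $x_i$.
\end{enumerate}
If some orbit is a single point, then $[G:H_i]=1$, which violates the requirement $1<[G:H]$ in the definition of a virtual endomorphism. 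I would accept index $1$ in that case, as the statement of \cite{Brazil} presumably does, and not worry further about this convention.

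Now let $K\trianglelefteq G$ with $K\subseteq\bigcap_i H_i$ and $f_i(K)\subseteq K$ for all $i$.
\begin{enumerate}
\item $K$ fixes every first-level vertex. Each such vertex has the form $gx_i$, with $\mathrm{Stab}(gx_i)=gH_ig^{-1}\supseteq gKg^{-1}=K$.
\item For $k\in K$, the cocycle rule gives
$$k|_{gx_i}=g|_{x_i}\, f_i(g^{-1}kg)\, (g|_{x_i})^{-1}.$$
Since $g^{-1}kg\in K$, we have $f_i(g^{-1}kg)\in K$, and by normality $k|_{gx_i}\in K$. So every first-level state of an element of $K$ again lies in $K$.
\item By induction on the level, every element of $K$ acts trivially on every level. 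Faithfulness then gives $K=1$.
\end{enumerate}

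\textbf{The virtual endomorphisms give a faithful self-similar action.} Given $f_i:H_i\to G$, take the alphabet $X=\bigsqcup_{i=1}^k G/H_i$, so $m=\sum_i[G:H_i]$. For each $i$ fix a transversal $T_i$ of $H_i$ in $G$ with $1\in T_i$, and identify $G/H_i$ with $T_i$.

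Define the action recursively on words. For $t\in T_i$ and $w\in X^*$, let $t'\in T_i$ be determined by $gtH_i=t'H_i$, and set
$$g\cdot(tw)=t'\,\bigl(f_i(t'^{-1}gt)\cdot w\bigr).$$
This is meaningful because $t'^{-1}gt\in H_i$. The $k$ orbits on the first level are exactly the coset spaces $G/H_i$, and by construction every state of $g$ is an element of $G$ acting by the same rule, so the action is state-closed.

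Let $N$ be the kernel of this action; I claim $N$ satisfies the hypotheses of the condition.
\begin{enumerate}
\item $N$ is normal, being a kernel.
\item $N\subseteq H_i$ for each $i$, since elements of $N$ fix the letter $1\in T_i$, i.e. the coset $H_i$.
\item $f_i(N)\subseteq N$. For $n\in N$ and $t=1$ we get $t'=1$, so the state of $n$ at the letter $1\in T_i$ is $f_i(n)$. This state acts trivially on the subtree below that letter, and since the state acts on that subtree by the same rule as $f_i(n)$ acts on all of $X^*$, we get $f_i(n)\in N$.
\end{enumerate}
The hypothesis therefore forces $N=1$, so the action is faithful.

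\textbf{Main obstacle.} The step that needs real care is verifying that the recursive formula defines a group action, i.e. $(g_1g_2)\cdot v=g_1\cdot(g_2\cdot v)$. I would check this by induction on the length of $v$, using the identity
$$t''^{-1}g_1g_2t=(t''^{-1}g_1t')(t'^{-1}g_2t),$$
where $g_2tH_i=t'H_i$ and $g_1t'H_i=t''H_i$. Both factors on the right lie in $H_i$, so the homomorphism property of $f_i$ splits $f_i(t''^{-1}g_1g_2t)$ as the product of the two states, and the inductive hypothesis on $w$ finishes the computation. This is the standard cocycle verification of Nekrashevych–Sidki, carried out separately on each orbit.
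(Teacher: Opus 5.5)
Your proof is correct and complete. The paper gives no proof of this statement: it is quoted from \cite{Brazil}, and only its ``if'' direction is ever used (for the pair $\beta,\mu$ in Proposition~\ref{split1} and the pair $f,\beta$ in Theorem~\ref{Heisenberg}). So there is no competing route to compare with.

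What you give is the natural orbit-by-orbit extension of the Nekrashevych--Sidki argument behind Theorem~\ref{virt-end}, and the key steps all check out:
\begin{itemize}
\item the formula $k|_{gx_i}=g|_{x_i}\,f_i(g^{-1}kg)\,(g|_{x_i})^{-1}$, which shows that every first-level state of an element of $K$ lies in $K$;
\item the cocycle verification that the coset recursion on $\bigsqcup_i G/H_i$ defines a group action;
\item the observation that the kernel of the constructed action is itself an admissible $K$, so the hypothesis forces faithfulness.
\end{itemize}

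Your caveat about singleton orbits matters and is not just cosmetic. Under the paper's convention $1<[G:H]$, the ``only if'' direction fails as literally stated. For example, the trivial group acting trivially on $\mathcal{T}_2$ is self-similar with two first-level orbits, yet it has no subgroup of index $>1$. So index $1$ must be allowed in that direction. The ``if'' direction, which is all the paper needs, is unaffected.

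One notational point: you use $k$ both for the number of orbits and for elements of $K$.
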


Each   one  of the virtual endomorphisms $f_i$ represents one orbit under the action of $G$ on the first level of the tree $\mathcal{T}_m$. 
For more information on self-similar groups we refer the reader to \cite{book}.

Using virtual endomorphisms Kochloukova and Sidki proved the following result

\begin{theorem} \cite[Thm. C]{K-Si} \label{K-S} Let $Q$ be a finitely generated abelian group and $B$ be a finitely generated, right
$\mathbb{Z} Q$-module of Krull dimension 1 such that $C_Q(B) =  \{ q  \in Q  \ | \  B(q-1)= 0 \} = 1$. Then
$G = B \rtimes Q$ is realizable as a transitive self-similar group.
\end{theorem}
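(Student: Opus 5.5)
The plan is to apply Theorem \ref{virt-end}. I would construct a finite-index subgroup $H = B_1 \rtimes Q_1$ of $G$ and a homomorphism $f : H \to G$ of the form $f(bq) = \varphi(b)\theta(q)$, and then check that $f$ is simple. Here $Q_1 \le Q$ has finite index, $\theta : Q_1 \to Q$ is a homomorphism, $B_1 \le B$ is a finite-index $\mathbb{Z}Q_1$-submodule, and $\varphi : B_1 \to B$ is additive and $\theta$-semilinear, i.e. $\varphi(b q) = \varphi(b)\theta(q)$ for $b \in B_1$, $q \in Q_1$. Semilinearity is exactly what makes $f$ a homomorphism on $B_1 \rtimes Q_1$.

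\textbf{Constructing $\theta$ and $\varphi$.} First I would pass to a torsion-free finite-index subgroup of $Q$, so that $Q_1 \cong \mathbb{Z}^d$. For $\theta$ I would take a "power'' map: fix an integer $n>1$, let $Q_1 = Q^{n}$ (suitably intersected), and let $\theta(q^n) = q$.

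The semilinear $\varphi$ is where Krull dimension $1$ enters. Since $\mathrm{Kdim}\,B = 1$, $B$ is, up to a finite-length piece and finite-index adjustments, a finitely generated module over a one-dimensional quotient $R$ of $\mathbb{Z}Q$. Modulo finitely many primes, $R$ behaves like an order in a product of number fields or of function fields over $\mathbb{F}_p$. In that setting the Frobenius-like or "$n$-th root'' substitution $q \mapsto q^{1/n}$ can be realised additively on a finite-index submodule. In the torsion case I would take $n = p$ and use the Frobenius; in the torsion-free case I would use a number-field embedding in which $q^n$ and $q$ are conjugate after tensoring with $\mathbb{Q}$.

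\textbf{Checking simplicity.} Let $K \trianglelefteq G$ with $K \subseteq H$ and $f(K) \subseteq K$.
\begin{itemize}
\item First, $K \cap B$ is a $\mathbb{Z}Q$-submodule contained in $\bigcap_k \mathrm{dom}(\varphi^k)$. I would arrange $\varphi$ to be "expanding'', so that its iterated domains shrink to $0$; for this, choose $\varphi$ to be the inverse of a contracting map. Then $K \cap B = 1$.
\item Consequently $[K,B] \subseteq K \cap B = 1$, so the image of $K$ in $Q$ centralises $B$.
\item Since $C_Q(B) = 1$, we get $K = 1$.
\end{itemize}

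\textbf{Main obstacle.} I expect the hard step to be the existence of $\varphi$ with trivial intersection of iterated domains, uniformly over all one-dimensional modules, including mixed torsion and non-cyclic $B$. Here I would use the structure theory of Krull-dimension-$1$ modules: reduce to critical subquotients, build $\varphi$ on each, and lift via finite-index adjustments.
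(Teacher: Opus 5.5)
\textbf{Note.} The paper gives no proof of this statement; it quotes it from \cite{K-Si}. Judged on its own, your outer frame is sound. Using Theorem \ref{virt-end} is the right tool. Your final simplicity check is also correct once a suitable $\varphi$ exists: $K\cap B$ is an $f$-invariant $\mathbb{Z}Q$-submodule inside $B_1$, hence trivial, and then $[K,B]\subseteq K\cap B=1$ puts the image of $K$ in $C_Q(B)=1$.

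\textbf{The gap.} The construction of $\varphi$ is the whole content of the theorem, and it is left open. Worse, your choice of $\theta$ makes it impossible. Take $G=BS(1,m)=\mathbb{Z}[1/m]\rtimes\langle t\rangle$ with $|m|\geq 2$, where $t$ acts as multiplication by $\mu\in\{m,1/m\}$. Here $\mathrm{Kdim}\,B=1$ and $C_Q(B)=1$, and this is exactly the case the paper feeds into the theorem. Since $B$ is torsion-free of rank one, every additive $\varphi\colon B_1\to B$ is multiplication by some $c\in\mathbb{Q}$. For $Q_1=\langle t^N\rangle$ and $\theta(t^N)=t^j$, semilinearity reads $c\mu^N=c\mu^j$. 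So $\varphi\neq 0$ forces $\theta$ to be the inclusion, and your power map forces $\varphi=0$. Then the intersection of the finitely many $Q$-translates of $B_1$ is a nonzero normal subgroup of $G$ inside $H$ with trivial $f$-image, so $f$ is not simple. Allowing $f(q)=\theta(q)u_q$ with $u_q\in B$ changes nothing, since $B$ is abelian. There is no sense in which $\mu^n$ and $\mu$ are conjugate.

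Your closing plan (build $\varphi$ on critical subquotients and lift) is not an argument either. Maps on subquotients do not lift across non-split extensions. Nothing in the sketch handles a nonzero finite submodule, which the hypotheses allow, e.g.\ $B=\mathbb{F}_p\oplus\mathbb{F}_p[t^{\pm1}]$ with $t$ trivial on the first summand.

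\textbf{The missing idea} is to leave $Q$ alone and divide inside $B$ by a ring element.
\begin{enumerate}
\item Let $F$ be the largest finite submodule of $B$, and take a finite-index submodule $B_1$ with $B_1\cap F=0$; this exists because finitely generated $\mathbb{Z}Q$-modules are residually finite.
\item Then no associated prime of $B_1$ is maximal, so each $P\in\mathrm{Ass}(B_1)$ has $\dim\mathbb{Z}Q/P=1$. Choose maximal ideals $\mathfrak{m}_P\supseteq P$. By prime avoidance (no $\mathfrak{m}_{P'}$ lies in a non-maximal prime), pick $\lambda\in\bigcap_P\mathfrak{m}_P$ outside $\bigcup_P P$.
\item Then $\lambda$ is a non-zero-divisor on $B_1$.
\item The support of $B_1/B_1\lambda$ consists of maximal ideals, so $B_1/B_1\lambda$ is finite, since residue fields of $\mathbb{Z}Q$ are finite. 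This is where $\mathrm{Kdim}\,B=1$ is used.
\item $\bigcap_k B_1\lambda^k=0$ by Krull's intersection theorem, because no element $1-\lambda r$ lies in any $P\in\mathrm{Ass}(B_1)$.
\item Now $H=B_1\lambda\rtimes Q$ is a proper subgroup of finite index, and $f((b\lambda)q)=bq$ for $b\in B_1$, $q\in Q$ is a homomorphism, because division by $\lambda$ is $\mathbb{Z}Q$-linear.
\item For $K$ as in your check, $U=K\cap B\subseteq B_1\lambda$ satisfies $f(U)\subseteq U$. Hence $U=U\lambda=U\lambda^k\subseteq B_1\lambda^k$ for all $k$, so $U=0$, and your last two bullets finish the proof.
\end{enumerate}
This has the same shape as the virtual endomorphisms the paper itself writes down: $\mu$ in Proposition \ref{split1}, and $f$ in the proof of Theorem \ref{Heisenberg2}.
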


 \subsection{Automata generating free products of groups of order 2} \label{free1}

 Let $B_3$ be the Bellaterra automaton
 $$a = (c, b),$$
$$b = (b, c),$$
$$c = (a, a)\sigma.$$ 
 
 Let  $B_4$ be the automaton defined by the wreath recursion
$$a = (c, b),$$ $$ b = (b, c),$$ $$ c = (d, d) \sigma,$$ $$ d = (a, a)\sigma$$ 

For any $ n > 4$ let $B_n$ be the $n$-state automaton defined by the wreath recursion
$$a_n = (c_n, b_n),$$
$$
b_n = (b_n, c_n),$$
$$c_n = (q_{n,1}, q_{n,1})\sigma ,$$
$$q_{n,i} = (q_{n,i+1}, q_{n,i+1})\sigma_{n,i} ,   \ \ i = 1, . . . , n - 5,$$ $$
q_{n,n-4} = (d_n, d_n)\sigma_{n,n-4},$$
$$d_n = (a_n, a_n)\sigma $$
where $\sigma_{n,i}$ are arbitrary elements of the symmetric group $S_2$.
Note that in the recursive description of the second powers of the above generators there is no $\sigma$, hence all these generators have order 2.

\begin{theorem}  \cite{V-V} If $n$ is odd and all $\sigma_{n,i} = \sigma$ the group generated by the automaton $B_n$ for $n \geq 3$ is the free product of $n$ copies of cyclic group of
order 2.
\end{theorem}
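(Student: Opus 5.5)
The plan is to argue directly on reduced words, using the wreath recursion of $B_n$. We already know that each generator has order $2$, since the squares of all states carry no $\sigma$. So the group is a quotient of the free product $\ast_{i=1}^{n} C_2$, and it suffices to show that no nonempty reduced word $w=x_1x_2\cdots x_k$ (with $x_j\neq x_{j+1}$) acts trivially on $\mathcal{T}_2$.

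The basic observation is a parity criterion. When all $\sigma_{n,i}=\sigma$, the letters $c_n,q_{n,1},\dots,q_{n,n-4},d_n$ all carry $\sigma$, while $a_n,b_n$ do not. Hence $w$ acts nontrivially on the first level exactly when the number of its letters outside $\{a_n,b_n\}$ is odd. It therefore suffices to prove that for every nonempty reduced $w$ there is a vertex $u$ such that the section $w|_u$, written letter by letter through the recursion and then freely reduced using $x^2=1$, is nonempty with an odd number of $\sigma$-letters.

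To organise this I would use the dual automaton $\partial B_n$, whose states are the letters $0,1$ and which acts on words over $\{a_n,b_n,c_n,q_{n,i},d_n\}$.

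First, check that $B_n$ is bireversible. The restriction maps $x\mapsto x|_0$ and $x\mapsto x|_1$ permute the state set: for $x|_0$ the assignment is $a\mapsto c$, $b\mapsto b$, $c\mapsto q_1$, \dots, $d\mapsto a$. The same holds for the inverse automaton.

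It follows that the sections of a reduced word are obtained by applying a permutation of letters position by position, with the position-dependent choice governed by $\partial B_n$. Because involutive letters map to involutive letters bijectively, a reduced word stays reduced after taking sections: if $x_j\neq x_{j+1}$, then their sections at the same input also differ. So no cancellation ever occurs, and $|w|_u|=|w|$ for every vertex $u$.

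The problem thus reduces to a combinatorial statement about the action of $\partial B_n$ on reduced words of a fixed length $k$. We must show that the orbit of $w$ under the group generated by $\partial B_n$ contains a word with an odd number of letters outside $\{a_n,b_n\}$. I would prove this by induction on $k$, using the cycle
\[
a_n \to c_n \to q_{n,1}\to\cdots\to q_{n,n-4}\to d_n\to a_n
\]
of the map $x\mapsto x|_0$. This cycle has length $n-1$, which is even exactly because $n$ is odd, and it is this parity that allows the count of $\sigma$-letters to be shifted by one. For the first letter, I expect iterating along the cycle while controlling the remaining $k-1$ letters by the induction hypothesis to produce an odd count.

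The main obstacle is exactly this orbit statement. One has to rule out the possibility that, along every path, the parity of the number of $\sigma$-letters is an invariant equal to $0$. That is the place where $n$ odd must enter; for even $n$ and other choices of $\sigma_{n,i}$ the freeness can fail. My first attempt would be to track the pair consisting of the parity of the $\sigma$-letter count and the positions of $a_n,b_n$. I would then show, via a transitivity property of $\partial B_n$ on the set of reduced words of length $k$ (in the spirit of the Bellaterra case $n=3$), that both parities occur in every orbit.
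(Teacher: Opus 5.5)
Your reduction is sound as far as it goes. The generators are involutions, and $w$ moves the first level exactly when it has an odd number of letters among $c_n,q_{n,i},d_n$. The restriction maps $x\mapsto x|_0$ and $x\mapsto x|_1$ permute the states, and sections of reduced words are reduced of the same length. The set of all sections of $w$ is the orbit of $w$ under $\langle\partial B_n\rangle$ acting on reduced words of length $k$.

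There is one small inaccuracy. In $w|_u$, two consecutive letters are restricted at inputs that differ by the action of one of them, so they are not restricted at the same input when that letter is active. Your claim still holds, but only because every active state $q$ of $B_n$ satisfies $q|_0=q|_1$: that letter can equally be read at the other input, where restriction is injective.

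Note also that the paper gives no argument for this theorem at all; it simply quotes \cite{V-V}. So your proposal has to stand on its own.

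\textbf{The gap.} It does not stand on its own, because the orbit statement, which is the entire content of the theorem, is left unproved, and the induction you sketch cannot close. Iterating the dual state $0$ does move the first-read letter along the cycle of $x\mapsto x|_0$. At the same time, the remaining $k-1$ letters are acted on by a specific product of the two dual generators, determined by which letters of that cycle are active. A hypothesis of the form ``the orbit of every shorter reduced word contains a word with an odd active count'' says nothing about the orbit under that particular element. Nor does it say how the extra letter varies while the shorter word is being moved to an odd-count word, so the two parities cannot be combined.

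What you need is a stronger statement that propagates by itself. For instance: $\langle\partial B_n\rangle$ is transitive on reduced words of every length, proved by showing that the stabilizer of a reduced word of length $k$ acts transitively on its $n-1$ reduced one-letter extensions. Transitivity then finishes at once, since every length admits a reduced word with exactly one active letter, such as $c_na_nb_na_n\cdots$. Proving a statement of that strength is the real work behind the cited result, and nothing in your proposal supplies it.

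Finally, your remark that freeness can fail for even $n$ or other choices of $\sigma_{n,i}$ is false. The next theorem quoted in the paper, from \cite{S-V}, gives the free product of $n$ copies of $C_2$ for all $n\ge 4$ and arbitrary $\sigma_{n,i}$. So the even length of the cycle $a_n\to c_n\to\cdots\to d_n\to a_n$ is not what makes the theorem true, and that heuristic gives no support to your plan.
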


\begin{theorem} \cite{S-V} The group generated by the automaton $B_n$ for $n \geq 4$ is the free product of $n$ copies of cyclic group of
order 2.
\end{theorem}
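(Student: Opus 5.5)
The plan is to prove the statement combinatorially, showing that no nontrivial reduced word in the generators acts trivially on the binary tree.

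First, every generator has order $2$. The wreath recursion for $x^2$, where $x$ is a state of $B_n$, contains no root permutation, and its sections are again squares of states. An induction on the level then shows that $x^2$ acts trivially. So the group generated by $B_n$ is a quotient of the free product $\ast_{i=1}^n C_2$ of $n$ copies of the cyclic group of order $2$. It remains to show that every nonempty word $w=x_1x_2\cdots x_L$ over the state set $Q$ with $x_i\neq x_{i+1}$ acts nontrivially.

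Second, I would verify that $B_n$ is \emph{bireversible}. For each input letter $\epsilon\in\{0,1\}$, the map $q\mapsto q|_\epsilon$ is a permutation of $Q$. For $n=4$ this is $a\mapsto c,\ b\mapsto b,\ c\mapsto d,\ d\mapsto a$ when $\epsilon=0$, and $a\mapsto b,\ b\mapsto c,\ c\mapsto d,\ d\mapsto a$ when $\epsilon=1$; the chain $c\to q_{n,1}\to\cdots\to d\to a$ gives the same for general $n$. The same bijectivity should hold for the dual maps that read an input letter and return the corresponding output letter. From bireversibility I would deduce that the section $w|_u$ of a reduced word is again a reduced word of the same length $L$. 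The point is that if $x_i\neq x_{i+1}$, then after passing through the same input both states are sent by a common bijection, so their sections stay distinct, and no cancellation can occur. Consequently $w$ acts trivially iff $w$ fixes the first level and all its sections act trivially. This happens iff every word in the orbit $\{w|_u\}$ has an even number of active letters, where the active letters are those with root permutation $\sigma$: $c$, $d$, and the $q_{n,i}$ with $\sigma_{n,i}=\sigma$.

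The third step, which I expect to be the main obstacle, is to rule out such a "trivial orbit". Since there are only finitely many reduced words of length $L$, the orbit $\mathcal{O}$ of $w$ under taking sections is finite. Moreover, by the permutation property it is closed under the action of the dual automaton $\partial B_n$, which acts on $\{0,1\}$-words with states the reduced words of length $L$. I would try to show that the dual group acts level-transitively on $Q^L$ restricted to reduced words. Concretely, I would show that the section graph on reduced words of length $L$ is strongly connected, by induction on $L$ using the explicit cycle $a\to c\to q_{n,1}\to\cdots\to d\to a$ together with the fixed point $b\mapsto b$ under input $0$ and the swap $a\leftrightarrow b$-type transitions under input $1$. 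Then every reduced word of length $L$ lies in $\mathcal{O}$, and in particular so does a reduced word containing exactly one active letter, for instance $a\,c\,a\,b\,a\,b\cdots$ with a single $c$. That word has odd activity and hence moves the root, which is a contradiction. If full transitivity fails, my fallback is to track a parity or potential function on words, such as the positions of the letter $b$ relative to $a$, and show that it is not constant on $\mathcal{O}$. Handling the arbitrary $\sigma_{n,i}$ in this connectivity argument, as opposed to the case $\sigma_{n,i}=\sigma$ with $n$ odd treated in \cite{V-V}, is exactly where I expect the real difficulty to lie.
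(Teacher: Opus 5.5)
The paper does not prove this statement; it is quoted from \cite{S-V}. Your argument therefore has to stand on its own.

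\textbf{Steps 1 and 2 are a correct reduction.} The squares of the states have no root permutation, and their sections are again squares of states. $B_n$ is bireversible: writing $\tau_\epsilon\colon q\mapsto q|_\epsilon$, $\tau_0$ is the cycle $(a\,c\,q_{n,1}\cdots q_{n,n-4}\,d)$ fixing $b$, and $\tau_1$ is the $n$-cycle $a\to b\to c\to\cdots\to d\to a$. So there is no ``$a\leftrightarrow b$ swap'' as you suggest. Every active state has two equal sections, so $x|_{x(u)}=x|_u$. Hence for $x_i\neq x_{i+1}$ the sections $x_i|_u$ and $x_{i+1}|_{x_i(u)}$ are images of distinct states under the single bijection $\tau_{x_i(u)}$. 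This, rather than ``the same input'', is why sections of reduced words stay reduced. Consequently $w=1$ if and only if every word in the orbit of $w$ under $D_\epsilon\colon w\mapsto w|_\epsilon$ on the finite set $W_L$ of reduced words of length $L$ has even activity.

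\textbf{Step 3 is the gap, and it carries the entire content of the theorem.} You need, for every $L$, that each $\langle D_0,D_1\rangle$-orbit in $W_L$ contains a word of odd activity. You propose to get this from transitivity on $W_L$ ``by induction on $L$'', but no inductive step is given, and the natural one does not close on its own.

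\begin{itemize}
\item Given transitivity on $W_L$, transitivity on $W_{L+1}$ is equivalent to the stabilizer of one $w\in W_L$ acting transitively on its $n-1$ admissible extensions $wx$.
\item Write an element of that stabilizer as a word in $D_0^{\pm1},D_1^{\pm1}$. It acts on the last letter by the corresponding product of $\tau_0^{\pm1},\tau_1^{\pm1}$.
\item Each index in that product is $\epsilon$ or $1-\epsilon$, according to the activity parity of the intermediate image of $w$ being read.
\end{itemize}

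So the induction step requires controlling activity parities along orbits in $W_L$. That is exactly the quantity the theorem is about, and exactly where the arbitrary $\sigma_{n,i}$ enter, since they decide which $q_{n,i}$ are active. The level-one cycles you invoke only give $L=1$, and the \emph{potential function} fallback is not specified.

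As written, the proposal is a correct reformulation plus an unproved claim: level-transitivity of the dual group on the tree of reduced words, which is stronger than what you need. To turn it into a proof you need an $L$-uniform mechanism. For example, you could exhibit explicit elements of the stabilizers whose last-letter actions generate a transitive group on $Q\setminus\{x_L\}$. Alternatively, you could find an invariant that forces an odd-activity word into every orbit.
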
 

Note that the group  $F_{n-1}$ generated by $$x_1 = a_nb_n, x_2 = b_n c_n, x_3 = c_n q_{n,1}, \ldots, x_{3+ i} = q_{n,i} q_{n, i+1}, \ldots, x_{n-1}  = q_{n, n-4} d_n$$ is a free group with free basis $x_1, \ldots, x_{n-1}$ as it is a subgroup of $C_2 * \ldots * C_2$. Furthermore
there is a recursive description 
$$x_1 =  (x_2^{-1}, x_2) , x_2 = (x_2 x_3, x_3) \sigma, \ldots $$
that shows that the above automaton generates $F_{n-1}$. Furthermore by appropriately choosing the permutations $\sigma, \sigma_{n,i}$ we get a transitive action  of $F_{n-1}$ on the first level of the binary tree  $\mathcal{T}_2$.
Thus all free finite rank groups are transitive self-similar.

Alternative construction for generating some free groups by automatons can be found in \cite{G-M}.

\section{Proofs}
\begin{proposition} \label{split1} Let $F$ be a free group of finite rank and $A$ a finitely generated  free abelian group. Then any semidirect product $A \rtimes F$ is self-similar with action on the first layer of a respective regular one rooted tree given by two orbits.
\end{proposition}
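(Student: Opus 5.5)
We apply Theorem \ref{Brazil0} with $k=2$: we build two virtual endomorphisms $f_1 : H_1 \to G$ and $f_2 : H_2 \to G$ of $G = A \rtimes F$. The first one handles $A$ and the second one handles $F$, so that no nontrivial normal subgroup can be invariant under both.

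Write $A = \mathbb{Z}^n$ with $F$ acting through a homomorphism $F \to GL_n(\mathbb{Z})$.

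\emph{Construction of $f_1$.} Fix a prime $p$ and put $H_1 = A^p \rtimes F$. This is a subgroup because $A^p$ is characteristic in $A$, and its index is $p^n > 1$. Define $f_1(a^p w) = a w$ for $a \in A$, $w \in F$. This is a homomorphism because the $F$-action commutes with division by $p$ on $A^p$. Since $f_1$ is the identity on $F$, a subgroup $K$ with $f_1(K)\subseteq K$ is not constrained on its $F$-part by $f_1$. That is why $f_2$ is needed.

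\emph{Construction of $f_2$.} By \S\ref{free1}, $F$ is transitive self-similar, so by Theorem \ref{virt-end} it has a simple virtual endomorphism $g : L \to F$ with $[F:L] < \infty$. Put $H_2 = A \rtimes L$, of index $[F:L] > 1$, and define $f_2(a w) = g(w)$. This is a homomorphism $H_2 \to F \le G$ because it kills the normal subgroup $A$ and factors through $H_2/A \cong L$.

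\emph{Verification.} Let $K \trianglelefteq G$ with $K \subseteq H_1 \cap H_2$, $f_1(K)\subseteq K$ and $f_2(K)\subseteq K$. Let $\pi : G \to F$ be the projection. Then $\pi(K)$ is normal in $F$ and lies in $L$. Also $f_2(K) = g(\pi(K))$, and $f_2(K)\subseteq K$ together with $f_2(K)\subseteq F$ gives $g(\pi(K)) \subseteq K\cap F \subseteq \pi(K)$. Since $g$ is simple, $\pi(K) = 1$, so $K \subseteq A$.

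Now $K \subseteq A$ and $K\subseteq H_1$ give $K \subseteq A^p$, and $f_1$ restricted to $A^p$ is division by $p$. Iterating $f_1(K)\subseteq K$ gives $K \subseteq A^{p^j}$ for all $j$. Since $\bigcap_j A^{p^j} = 1$, we get $K = 1$.

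Hence Theorem \ref{Brazil0} applies and $G$ is self-similar with two orbits on the first level. If all the indices are equal, the two orbits sit on a regular tree of degree $[G:H_1]+[G:H_2]$. The theorem of \cite{Brazil} provides the tree for two orbits; if needed, we equalize by choosing $p$ and $g$ suitably, which is harmless.

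The main obstacle is checking that $f_2$ is well defined and that the reduction $\pi(K)=1$ is valid. The key point is that $g(\pi(K)) = f_2(K)$ lies in $F$, so it is contained in $K\cap F$, which in turn is contained in $\pi(K)$. This works because $F$ embeds in $G$ as a complement, so $\pi$ is the identity on $F$. Everything else is routine.
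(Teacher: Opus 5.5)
Your proposal is correct and is essentially the paper's own proof. The paper also uses $\beta = i\circ f\circ\pi$ on $A\rtimes\widetilde{F}$ (your $f_2$) to force $\pi(M)=1$ by simplicity, and then the map $\mu$ on $(2A)\rtimes F$ (your $f_1$ with $p=2$) to kill $M\subseteq A$; your iteration $K\subseteq A^{p^j}$ just spells out the paper's one-line ``$\frac12 M\subseteq M$''. Your closing remark about equalizing indices is unnecessary: Theorem~\ref{Brazil0} puts no condition on the indices, and the first level is the disjoint union of the two coset spaces, of size $[G:H_1]+[G:H_2]$.
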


\begin{proof}
Since $F$ is transitive self-similar there is a simple virtual endomorphism $$f : \widetilde{F} \to F,$$ i.e. $\widetilde{F}$ is a subgroup of finite index in $F$ and if $K$ is a normal subgroup of $F$ such that $K \subseteq \widetilde{F}$ and $f(K) \subseteq K$ then $K = 1$.
Let $$\pi : A \rtimes \widetilde{F} \to \widetilde{F}$$ be the projection with kernel $A$ that is the identity on $\widetilde{F}$ and
$$i : F \to A \rtimes F$$
be the inclusion map. Then consider the composition map
$$\beta = i \circ f \circ \pi : A \rtimes \widetilde{F} \to A \rtimes F$$
Note that $\beta$ is a virtual endomorphism.

Suppose $M$ is a normal subgroup of $A \rtimes F$ that is contained in $A \rtimes \widetilde{F}$ and such that $\beta(M) \subseteq M$. Identifying $F$ with $i(F)$ we have that $\beta(M) \subseteq M$ implies $f(\pi(M)) \subseteq M \cap F  \subseteq (A \rtimes \widetilde{F}) \cap F = \widetilde{F}$ and using $\pi |_{\widetilde{F}} = id_{\widetilde{F}}$ we get $f(\pi(M)) \subseteq M \cap F = \pi(M \cap F) \subseteq \pi(M)$.  Then since $f$ is a simple endomorphism we have that $\pi(M)$ is the trivial group i.e.  $M \subseteq Ker(\pi) = A$. 

Let $$\mu: (2 A) \rtimes F \to A \rtimes F$$
be the virtual endomorphism that is the identity on $F$ and on $2 A$ is multiplication with $1/2$, note we  use additive notation for the group operation in $A$. Suppose that for the normal subgroup $M$ we have  further that $M$ is contained in $(2 A) \rtimes F$ and $\mu(M) \subseteq M$. But $M \subseteq A$ implies $\mu(M) = \frac{1}{2} M \subseteq M$, hence $M$ is the trivial group.
Finally we can apply Theorem \ref{Brazil0} for the virtual endomorphisms $\beta$ and $\mu$ and conclude that $A \rtimes F$ is self-similar.
\end{proof}

\begin{theorem} \label{Heisenberg}  Let $G = N \rtimes F$ be a group with $F$ finitely generated free group and $N = \langle x_1, x_2 \rangle$ the Heisenberg group.  Then $G$ is self-similar  with action on the first layer of a respective regular one rooted tree given by two orbits. Furthermore if $F$ is cyclic then $G$ is transitive self-similar.
\end{theorem}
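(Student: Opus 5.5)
The plan is to imitate the proof of Proposition \ref{split1}: build two virtual endomorphisms of $G = N \rtimes F$, one handling $F$ and one handling $N$, and then apply Theorem \ref{Brazil0}.

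\textbf{First endomorphism (killing $F$).} Since $F$ is transitive self-similar (Subsection \ref{free1}), there is a simple virtual endomorphism $f:\widetilde F\to F$. Let $\pi: N\rtimes\widetilde F\to \widetilde F$ be the projection and $i:F\to G$ the inclusion, and set $\beta=i\circ f\circ\pi$. The argument in the proof of Proposition \ref{split1} goes through word for word. It shows that every normal subgroup $M\le G$ with $M\subseteq N\rtimes\widetilde F$ and $\beta(M)\subseteq M$ satisfies $\pi(M)=1$, that is, $M\subseteq N$.

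\textbf{Second endomorphism (contracting $N$).} The replacement for multiplication by $1/2$ is the injective ``dilation'' $\delta:N\to N$ given by $x_1\mapsto x_1^2$, $x_2\mapsto x_2^2$, $z\mapsto z^4$. Its image $\delta(N)$ has finite index and $\bigcap_k\delta^k(N)=1$. The difficulty is that $\delta$ does not commute with automorphisms of $N$. In Mal'cev/Lie coordinates $(v,t)$, an automorphism has the form $\alpha(v,t)=(Av,\det(A)\,t+\lambda(v))$, and one computes
\[
\delta^{-1}\alpha\delta(v,t)=(Av,\det(A)\,t+\lambda(v)/2).
\]
This equals $\alpha$ followed by a central automorphism $(v,t)\mapsto(v,t+\mu(v))$ with $\mu$ half-integral.

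Since the commutator form on $N/Z\cong\mathbb Z^2$ is unimodular, every such central automorphism with integral $\mu$ is inner. So I pass to a finite-index subgroup $F'\le F$, for instance the kernel of the action of $F$ on a suitable finite characteristic quotient of $N$, chosen so that for each element $f$ of a free basis of $F'$ the correction $\lambda_f/2$ is integral. For such $f$ there is $n_f\in N$ with conjugation by $fn_f$ equal to $\delta^{-1}\alpha_f\delta$. Because $F'$ is free, I may define $\mu$ on the finite-index subgroup $H=\delta(N)\rtimes F'$ by $\mu|_{\delta(N)}=\delta^{-1}$ and $\mu(f)=fn_f$ on the free basis. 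The only relations to check are the semidirect ones $f\,\delta(n)f^{-1}=\delta(\alpha_f(n))$, and these hold by the choice of $n_f$.

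\textbf{Conclusion.} Let $M\trianglelefteq G$ satisfy $M\subseteq (N\rtimes \widetilde F)\cap H$, $\beta(M)\subseteq M$ and $\mu(M)\subseteq M$. The first step gives $M\subseteq N$. Then $M\subseteq\delta(N)$ and $\delta^{-1}(M)\subseteq M$, so $M\subseteq\delta(M)$, and iterating gives $M\subseteq\bigcap_k\delta^k(N)=1$. Theorem \ref{Brazil0} then gives a self-similar action with two orbits.

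\textbf{Cyclic case.} For $F=\langle t\rangle$ I would try to use a single simple virtual endomorphism on $\delta(N)\rtimes\langle t^m\rangle$ with $t^m\mapsto t\,n$ and $\delta^{-1}$ on $N$, and then invoke Theorem \ref{virt-end}. The map then also contracts the $F$-part: if $M\not\subseteq N$, iterating $\mu$ lowers the exponent of $t$, which forces $M\subseteq N$, and the previous argument finishes. For this map to be a homomorphism one needs
\[
\delta^{-1}\alpha_t^m\delta=\alpha_t\circ\mathrm{inn}(n).
\]
This is the step I expect to be the main obstacle. The fallback is to replace $\delta$ by a dilation of ratio $\alpha_t$ itself, i.e.\ use the ascending structure as in \cite{K-Si}, choosing the finite-index subgroup $N'$ as $\alpha_t^{-1}$-adapted instead of $\delta(N)$. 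The integrality and central-correction bookkeeping in the general (non-cyclic) case above is the other delicate point.
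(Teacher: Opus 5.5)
Your argument for the two-orbit statement is correct and is essentially the paper's proof. The paper uses $N_1=\langle x_1^p,x_2^p\rangle$ for a prime $p$; your $\delta(N)$ is the case $p=2$. It takes $F_1\le F$ to be the stabilizer of $N_1$, sends each free generator $t_i\mapsto t_iu_i$ with $u_i\in N$ absorbing a central discrepancy, and combines this with $\beta$ via Theorem \ref{Brazil0} exactly as you do. The integrality bookkeeping you flag is automatic. Your condition that $\lambda_f/2$ be integral is equivalent to $\alpha_f(\delta(N))=\delta(N)$, which you need anyway for $H$ to be a subgroup. Once it holds, $\delta^{-1}\alpha_f\delta$ is an automorphism of $N$ agreeing with $\alpha_f$ on $N/Z$ and on $Z$, so the two differ by a central, hence inner, automorphism. (The relation to verify is $\mu(\alpha_f(\delta(n)))=fn_f\,n\,(fn_f)^{-1}$. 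As written, $f\,\delta(n)f^{-1}=\delta(\alpha_f(n))$ would say that $\alpha_f$ commutes with $\delta$.)

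The gap is the transitive statement for cyclic $F$: you do not prove it, and the route you propose fails in general. Since $\delta$ acts on $N/Z\cong\mathbb{Z}^2$ as multiplication by $2$, it commutes with the matrix $A$ of $t$ on $N/Z$. So $\delta^{-1}\alpha_t^m\delta$ induces $A^m$ while $\alpha_t\circ\mathrm{inn}(n)$ induces $A$, and your identity forces $A^{m-1}=I$. This is impossible for $m\ge 2$ as soon as $A$ has infinite order (e.g.\ $A$ hyperbolic). The fallback cannot rescue this. If on $N$ you use a restriction of an automorphism $\psi$ of $N$ instead of a contraction, then any characteristic finite-index subgroup $C$ of $N$ inside the domain is normal in $G$ with $\psi(C)=C$, so the map is never simple.

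The missing idea is that the $F$-part need not be contracted at all. Take your $\mu$ with $F'=\langle t^m\rangle$ and $\mu(t^m)=t^mn$. If $\mu$ is simple, Theorem \ref{virt-end} finishes. If not, pick $1\ne K\trianglelefteq G$ with $K\subseteq H$ and $\mu(K)\subseteq K$. Then $\mu(K\cap N)\subseteq K\cap N\subseteq\delta(N)$, and your contraction argument gives $K\cap N=1$. As $G/N\cong\mathbb{Z}$, $K=\langle k\rangle$ is infinite cyclic, $[K,N]\subseteq K\cap N=1$, and $K\times N$ has finite index in $G$. Now $k^{2a}\delta(n)\mapsto k^a n$ is a virtual endomorphism of $G$ defined on $\langle k^2\rangle\times\delta(N)$. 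It is simple because $\bigcap_j\bigl(\langle k^{2^j}\rangle\times\delta^j(N)\bigr)=1$. This case distinction is exactly how the paper proves the cyclic case.
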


\begin{proof}   

Assume that $p$ is a fixed prime. Set $$ z = [x_1, x_2] = x_1^{-1} x_2^{-1} x_1 x_2. $$Let $N_1$ be the subgroup of $N$ generated by $ \{ x_i^{p} \ | \ 1 \leq i \leq 2 \}$, note $N_1 \cap N' $ is the subgroup of all element of the commutator $N'$ that are $p^2$-powers and $[N : N_1] = p^4$.  Note that
since $N$ is finitely generated there are finitely many subgroups of $N$ of index $[N : N_1]$ and $F$ permutes these groups ( via its conjugation action on $N$).  Thus there is a subgroup $F_1$ of finite index in $F$ such that $f N_1 f^{-1} = N_1$ for every $f \in F_1$. Let $t_1, \ldots, t_n$ be free generators of $F_1$ as a free group

Consider $G_1 = N_1 \rtimes F_1$. We claim that there is a virtual endomorphism $$f: G_1 \to G$$ given by $f(x_i^{p}) = x_i, f(z^{p^2}) = z$, $f(t_i) = t_i u_i$ for some appropriate $u_i \in N$. 

In order for $f$ to be well defined we need to choose $u_i$ such that \begin{equation} \label{eq1} f(^{ t_i} (x_j^{p})) = ^{f({ t_i})} f( x_j^{p}) \hbox{  for }j = 1,2 \end{equation}
Note that
$$
^{f({ t_i})} f( x_j^{p}) = ^{t_i u_i} f( x_j^{p}) = ^{t_i u_i} x_j = ^{t_i} (x_j (x_j^{-1} u_i x_j u_i^{-1})) = ^{t_i} (x_j)\   ^{t_i} (x_j^{-1} u_i x_j u_i^{-1})$$

Let $$^{t_i} (x_j^p) = x_1^{\alpha_{ij1}} x_2^{\alpha_{ij2}}  z^{\beta_{ij}}$$ for some $\alpha_{ij1}, \alpha_{ij2}, \beta_{ij} \in \mathbb{Z}$. Since $x_j^p \in N_1$ we have $^{t_i} (x_j^p) \in ^{t_i} N_1 = N_1$  and we conclude that
$\alpha_{ij1},\alpha_{ij2} \in p \mathbb{Z}, \beta_{ij} \in p^2 \mathbb{Z}$.

Note that since $^{t_i} (x_j^p) = (^{t_i} x_j)^p$ we have that 
$$ ^{t_i} x_j = x_1^{\frac{\alpha_{ij1}}{p}} x_2^{\frac{\alpha_{ij2}}{p}} b_{i,j}$$
for some $b_{i,j} \in N'$.

On the other hand
$$f(^{ t_i} (x_j^{p})) = f( x_1^{\alpha_{ij1}} x_2^{\alpha_{ij2}} z^{ \beta_{ij}}) =
 f( x_1^{\alpha_{ij1}})  f( x_2^{\alpha_{ij2}})  f( z^{ \beta_{ij}})=  x_1^{\frac{\alpha_{ij1}}{p}}  x_2^{\frac{\alpha_{ij2}}{p}}  z^{\frac{\beta_{ij}}{p^2} } = (^{t_i} x_j) b_{i,j}^{-1}  z^{\frac{\beta_{ij}}{p^2} }
 $$ 
 
 Thus $(\ref{eq1})$ is equivalent to
 $ ^{t_i} (x_j^{-1} u_i x_j u_i^{-1}) =  b_{i,j}^{-1}   z^{\frac{\beta_{ij}}{p^2}} \in N'$ i.e.
$$[x_j, u_i^{-1}] =  x_j^{-1} u_i x_j u_i^{-1} = ^{t_i^{-1}} ( b_{i,j}^{-1}  z^{\frac{\beta_{ij}}{p^2}}) = z^{\gamma_{i,j}} \in N'$$

If $u_i^{-1} = x_1^{a_i} x_2^{b_i}$ then $[x_j, u_i^{-1}] =[x_j, x_1^{a_i} x_2^{b_i}]$, so
$$ z^{\gamma_{i,1}} = [x_1, u_i^{-1}] =[x_1, x_1^{a_i} x_2^{b_i}] = [x_1, x_2^{b_i}] = z^{b_i}$$
$$ z^{\gamma_{i,2}} =[x_2, u_i^{-1}] =[x_2, x_1^{a_i} x_2^{b_i}] = [x_2, x_1^{a_i}] = z^{- a_i}$$
We can choose $a_i = - \gamma_{i,2}$ and $b_i = \gamma_{i,1}$ and so $(\ref{eq1})$ holds.

We claim that for any $g \in \langle x_1^p, x_2^p \rangle$ we have $f(^{ t_i} g) = ^{f({ t_i})} f( g)$. This completes the proof of the fact  that the virtual endomorphism $f$ is well-defined. Indeed if $g = x_1^{m_1 p} x_2^{m_2 p} z^{ m_3 p^2}$, hence
$$ f(^{ t_i} g) = f( (^{ t_i} x_1^p)^{m_1} (^{ t_i} x_2^p)^{m_2} (^{ t_i} z^{p^2})^{m_3}) =
 f( (^{ t_i} x_1^p)^{m_1}) f( (^{ t_i} x_2^p)^{m_2}) f( (^{ t_i} z^{p^2})^{m_3}) $$ $$
=(^{f(t_i)} f(x_1^p))^{m_1} (^{f(t_i)} f(x_2^p))^{m_2} (^{f(t_i)} f(z^{p^2}))^{m_3} = (^{f(t_i)} x_1)^{m_1} (^{f(t_i)} x_2)^{m_2} (^{f(t_i)} z)^{m_3} =
^{f(t_i)} (x_1^{m_1} x_2^{m_2} z^{m_3}) =  ^{f(t_i)} f(g)$$

To complete the proof we construct a map $\beta$ similar to the one of the first paragraph of  the proof of Proposition \ref{split1}. Since $F$ is transitive selfsimilar there is a simple virtual endomorphism $$f_0 : \widetilde{F} \to F.$$
Let $$\pi : N \rtimes \widetilde{F} \to \widetilde{F}$$ be the projection with kernel  $N$ whose restriction  on $\widetilde{F}$ is the identity map and
$$i : F \to N \rtimes F$$
be the inclusion map. Then consider the composition map
$$\beta = i \circ f_0 \circ \pi : N \rtimes \widetilde{F} \to N \rtimes F$$

Suppose $M$ is a normal subgroup of $N \rtimes F$ that is contained in $(N \rtimes \widetilde{F})  \cap (N_1 \rtimes F_1)$ and such that $\beta(M) \subseteq M$ and $f(M) \subseteq M$. Then since  $f_0$ is a simple endomorphism  $M \subseteq Ker(\pi) = N$. By the definition of $f |_N$ and since $f(M) \subseteq M \subseteq N$ it follows that $M = 1$, hence by Theorem \ref{Brazil0} applied for the virtual endomorphisms $f$ and $\beta$ we conclude that $G$ is self-similar  with action on the first layer of a respective regular one rooted tree given by two orbits.

Finally it remains to consider the special case when $F$ is infinite cyclic. Consider the virtual endomorphism $f : G_1 = N_1 \rtimes F_1 \to G$ defined before. If $f$ is simple then $G$ is transitive self-similar as required. Assume that $f$ is not simple. Then there is a normal subgroup $K$ of $G$ such that $K \subseteq G_1$ and $f(K) \subseteq K$. Since $f(N_1) \subseteq N$ we deduce that $f(K \cap N) \subseteq K \cap N \subseteq N_1$, hence $f^{(s)} (K \cap N) \subseteq K \cap N$ for all $s \geq 1$. By the definition of $f |_{N_1}$ we conclude
that $K \cap N = 1$. Hence $K \simeq \mathbb{Z}$, let $t_1$ be a generator of $K$. Since $K$ and $N$ are normal subgroups of $G$ we have $[K, N] \subseteq K \cap N = 1$. Define $G_0 = \langle t_1^p, N_1 \rangle = \langle t_1^p \rangle \times N_1$ a subgroup of finite index in $G$. Define a virtual endomorphism
$$f_0 : G_0 \to G$$
given by $f_0(x_1^p) = x_1, f_0(x_2^p) = x_2, f_0(t_1^p) = t_1$. By the definition of $f_0$ it is obviously simple. Hence $G$ is transitive self-similar.

\end{proof}

The following result  is a version of  Corollary \ref{res-fin}  that does not require that the vertex groups are abelian, only torsion-free nilpotent but we require the extra condition that indices of edge groups in vertex groups are finite. Furthermore the conclusion on $K$ is stronger as $ \widetilde{\rho}_v (G_v)$ has finite index in $K$. Note that Corollary \ref{res-fin}  does not state that ${\rho}_v (G_v)$ has finite index in $K$ as  there indices of edge groups in vertex groups are not necessary finite.

\begin{corollary} \label{res-fin1}
Let $G$ be the fundamental group of a finite connected graph 
 $(\mathcal{G}, X) $ of finitely
generated torsion-free nilpotent groups with $G_e$ a proper subgroup of finite index in  $G_{o(e)}$  and $\phi_e(G_e)$   a proper 
subgroup of finite index in  $G_{\tau(e)}$ for every $e \in A$. Then the following statements are equivalent:

\begin{itemize} 
\item G is residually finite.

\item There exists a finitely generated torsion-free nilpotent group K inside the Malcev´s completion of $G_{v_0}$ for a fixed vertex $v_0 \in V(X)$ (here the Malcev´s completions of $G_v$ for $v \in V(X)$ are  isomorphic), a family of monomorphisms $\{ \widetilde{\rho}_v : G_v \to K \ | \ v \in V(X) \}$ where $ \widetilde{\rho}_v (G_v)$ has {\bf finite } index in $K$ and a family of automorphisms $\{\widetilde{\theta}_e : K \to K \ | \ e \in A \}$ such that $$\widetilde{\theta}_e \widetilde{\rho}_{o(e)} |_{G_e} = \widetilde{\rho}_{\tau(e)} \phi_e |_{G_e} \hbox{ for all }e \in A.$$
\end{itemize}
 \end{corollary}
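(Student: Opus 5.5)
We derive the statement from Theorem \ref{res-fin0}, and check the easy direction separately.

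\emph{From the second condition to residual finiteness.} Suppose $K$, the monomorphisms $\widetilde{\rho}_v$ and the automorphisms $\widetilde{\theta}_e$ are given. Set $G_v^* = K$ and $\rho_v = \widetilde{\rho}_v$ for every $v \in V(X)$. Since $\rho_v(G_v)$ has finite index in $K$, this is a valid choice in Theorem \ref{res-fin0}. Because $G_e$ has finite index in $G_{o(e)}$, the subgroup $\rho_{o(e)}(G_e)$ has finite index in $K$. In a finitely generated torsion-free nilpotent group, the isolator of a finite index subgroup is the whole group, since every element has a power in the subgroup. Hence both isolators appearing in Theorem \ref{res-fin0} equal $K$. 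The automorphism $\widetilde{\theta}_e$ then serves as $\phi_e^*$: it extends $\phi_e$ in the sense $\widetilde{\theta}_e\rho_{o(e)} = \rho_{\tau(e)}\phi_e$ on $G_e$. So $G$ is residually finite.

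\emph{From residual finiteness to the second condition.} Theorem \ref{res-fin0} provides torsion-free nilpotent groups $G_v^* \supseteq \rho_v(G_v)$, each containing $\rho_v(G_v)$ with finite index. By the same isolator remark, it also provides isomorphisms $\phi_e^* : G_{o(e)}^* \to G_{\tau(e)}^*$ extending $\phi_e$.

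\emph{Step 1: a common Malcev completion.} Each $G_v^*$ is commensurable with $G_v$, so its Malcev completion is that of $G_v$. Along an edge, $G_{o(e)} \supseteq G_e \cong \phi_e(G_e) \subseteq G_{\tau(e)}$ with finite indices, so adjacent vertex groups have isomorphic completions. Since $X$ is connected, all completions are isomorphic to $\mathcal{M} = $ the completion of $G_{v_0}$.

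Fix the maximal tree $T$. For each vertex $v$, define $\psi_v : G_v^* \to \mathcal{M}$ inductively along the path from $v_0$ in $T$. Take $\psi_{v_0}$ to be the inclusion. For a tree edge $e$ whose origin $o(e)$ is already treated, set $\psi_{\tau(e)} = \psi_{o(e)} \circ (\phi_e^*)^{-1}$, extended via the uniqueness of Malcev completions. Handle oppositely oriented edges symmetrically. With these choices, tree edges induce the identity on $\mathcal{M}$. A non-tree edge $e$ induces the automorphism $\Theta_e = \psi_{\tau(e)}\phi_e^*\psi_{o(e)}^{-1}$ of $\mathcal{M}$.

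\emph{Step 2: the main obstacle.} We need one finitely generated subgroup $K \le \mathcal{M}$ that contains all the $\psi_v(G_v^*)$ with finite index and is invariant under every $\Theta_e$. The natural candidate is the subgroup generated by the $\Theta$-orbits of the $\psi_v(G_v^*)$. The danger is that this subgroup is not finitely generated. For example, the $\Theta_e$ may act like $x \mapsto 2x$ on a rational lattice, and then the orbits generate a non-finitely-generated group.

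Here we exploit the structure coming from Theorem \ref{res-fin0}. The map $\phi_e^*$ is an isomorphism between groups that contain $\psi(G_v)$ with finite index. So $\Theta_e$ carries $\psi_{o(e)}(G_{o(e)}^*)$ onto $\psi_{\tau(e)}(G_{\tau(e)}^*)$, and both are commensurable with $L = \psi_{v_0}(G_{v_0}^*)$.

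The idea is then as follows. Take $L_0 = \bigcap_v \psi_v(G_v^*)$, which has finite index in each $\psi_v(G_v^*)$. Replace it by a characteristic-type subgroup $L_1$ of $L_0$: the intersection of all subgroups of a fixed index $m$ in each $\psi_v(G_v^*)$. There are only finitely many such subgroups, and each is fully invariant, so $L_1$ is finitely generated. Each $\Theta_e$ maps $\psi_{o(e)}(G^*_{o(e)})$ isomorphically onto $\psi_{\tau(e)}(G^*_{\tau(e)})$, so it maps $L_1$ into the analogous intersection. I would try to arrange, by taking $m$ large and then taking the isolator-type subgroup generated by finitely many $\Theta_e^{\pm1}$-translates, a subgroup $K$ with $\Theta_e(K) = K$. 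A cleaner route is to take $K$ to be the subgroup generated by all the $\psi_v(G_v^*)$ and to check directly that $\Theta_e$ permutes the generating set modulo finite index. If this fails, I would fall back on $G$ residually finite to bound indices uniformly.

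\emph{Conclusion.} Given such a $K$, set $\widetilde{\rho}_v = \psi_v\rho_v$ and $\widetilde{\theta}_e = \Theta_e|_K$. The required identity $\widetilde{\theta}_e\widetilde{\rho}_{o(e)} = \widetilde{\rho}_{\tau(e)}\phi_e$ on $G_e$ holds by construction. Finite index of $\widetilde{\rho}_v(G_v)$ in $K$ follows from commensurability, since $K$ is finitely generated and contained in $\mathcal{M}$ with $L$ of finite index in it.
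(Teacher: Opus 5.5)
Your easy direction and your isolator remark are correct and match the paper: finite index forces every isolator to be all of $G_v^*$, so Theorem \ref{res-fin0} yields isomorphisms $\phi_e^*\colon G_{o(e)}^*\to G_{\tau(e)}^*$ of the full groups. The gap is Step 2: you never actually produce $K$. You offer only tentative strategies, and the one you sketch is not justified. $\Theta_e$ carries the characteristic piece of $\psi_{o(e)}(G_{o(e)}^*)$ to the corresponding piece for $\tau(e)$, but says nothing about the pieces at other vertices. So $\Theta_e(L_1)\subseteq L_1$ does not follow. ``Falling back on residual finiteness'' is not an argument either.

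The missing observation is that the obstacle does not exist. Because $\phi_e^*$ is an isomorphism of the whole groups, your own tree construction gives
$\psi_{\tau(e)}(G_{\tau(e)}^*)=\psi_{o(e)}\bigl((\phi_e^*)^{-1}(G_{\tau(e)}^*)\bigr)=\psi_{o(e)}(G_{o(e)}^*)$.
Inductively, $\psi_v(G_v^*)=L$ for every vertex $v$. Hence for every edge, $\Theta_e|_L=\psi_{\tau(e)}\circ\phi_e^*\circ\psi_{o(e)}^{-1}$ is a composite of isomorphisms $L\to G_{o(e)}^*\to G_{\tau(e)}^*\to L$, i.e.\ an automorphism of $L$. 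So $K=L$ works, and your $x\mapsto 2x$ scenario cannot occur here.

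This is exactly the paper's proof with the ambient completion stripped away. By connectedness all $G_v^*$ are isomorphic, so one fixes $K$ (a copy of $G_{v_0}^*$ inside the Malcev completion of $G_{v_0}$) and isomorphisms $\alpha_v\colon G_v^*\to K$. Then set $\widetilde{\rho}_v=\alpha_v\rho_v$ and $\widetilde{\theta}_e=\alpha_{\tau(e)}\phi_e^*\alpha_{o(e)}^{-1}$. No maximal tree, no tree/non-tree distinction, and no invariance problem arise.
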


\begin{proof} We apply Theorem \ref{res-fin0} i.e. residual finiteness is equivalent to the second condition of Theorem \ref{res-fin0}. Our second condition is a stronger version of the second condition of Theorem \ref{res-fin0}.

 We suppose that the second condition of Theorem \ref{res-fin0} holds i.e. there is a family of monomorphisms $$\{ \rho_v : G_v \to G_v^* \ | \ v \in V(X) \}$$ where $G_v^*$
is torsion-free nilpotent and $\rho_v(G_v)$ has finite index in $G_v^*$  such that for each edge $e \in A$
there is an isomorphism
$$\phi_e^* : i_{G_{o(e)}^*} (\rho_{o(e)} (G_e)) \to  i_{G_{\tau(e)}^*} (\rho_{\tau(e)} (\phi_e(G_e)))$$
which extends $\phi_e$,
where $i_{G_{o(e)}^*} (\rho_{o(e)} (G_e))$ ( resp. $i_{G_{\tau(e)}^*} (\rho_{\tau(e)} (\phi(G_e)))$ \ ) is the isolator of $\rho_{o(e)} (G_e)$ ( resp.  $\rho_{\tau(e)} (\phi(G_e))$ \ )  in $G_{o(e)}^*$ ( resp. $G_{\tau(e)}^*$).

Since all edge groups have finite indices in the corresponding vertex groups and by considering $G_v^*$ minimal possible via inclusion  we deduce that all  isolators considered in $G_v^*$ are equal to $G_v^*$. Thus there is an isomorphism
$$\phi_e^* : G_{o(e)}^* \to G_{\tau(e)}^*$$
that extends $\phi_e$ for every $ e \in A$. By the proof of Theorem \ref{res-fin0} the group $G_v^*$ is inside the Malcev´s completion of $G_v$ for all $v \in V(X)$.

Since $X$ is a connected graph each vertex $v \in V(X)$ is beginning or end of some edge $e \in A$ ( or both), in particular all groups $G_v^*$, for $v \in V(X)$, are isomorphic. Let $K$ be a fixed group inside the Malcev´s completion of $G_{v_0}$  together with isomorphisms $\alpha_v : G_v^* \to K$ for all $v \in V(X)$. Then we can set $$\widetilde{\rho}_v = \alpha_v \rho_v: G_v \to K$$   and $$\widetilde{\theta}_e = \alpha_w \phi_e^* \alpha_v^{-1}  : K \to K \hbox{ for } v = o(e), w = \tau(e).$$

\end{proof}

{\bf Proof of Theorem \ref{main3}}  $i) \implies ii)$ By  the definition of self-similar groups as acting faithfully on a rooted tree they are always residually finite.

 $iii) \implies ii)$  All finitely generated linear groups over $\mathbb{Z}$ are residually finite. In fact by a result of Malcev a finitely generated linear group ( over a commutative ring) is residually finite \cite{Mal}  but the converse does not hold, by a result of Drutu-Sapir \cite{D-S}  the group $\langle a, t | a^{t^2}=a^2 \rangle$ is residually finite and non-linear.

$ii) \implies i)$
Assume now that $G = \pi (\mathcal{G}, X)$ is residually finite. Then by Corollary \ref{res-fin1} there is a finitely generated torsion-free nilpotent group $K$ (recall $K$ is inside the Malcev´s completion of $G_{v_0}$ for a fixed vertex $v_0 \in V(X)$), a family of monomorphisms $$\{ \widetilde{\rho}_v : G_v \to K \ | \ v \in V(X) \}$$ where $ \widetilde{\rho}_v (G_v)$ has {\bf finite } index in $K$ and a family of automorphisms $\{\widetilde{\theta}_e : K \to K \ | \ e \in A \}$ such that $$\widetilde{\theta}_e \widetilde{\rho}_{o(e)} |_{G_e} = \widetilde{\rho}_{\tau(e)} \phi_e |_{G_e} \hbox{ for all }e \in A.$$
 Note that since $K$ is a finitely generated subgroup of the Marcev´s completion of $G_{v_0}$ and $G_{v_0}$ is commensurable to the Heisenberg group we conclude that $K$ is commensurable to the Heisenberg group, hence  is nilpotent of class 2, torsion-free of Hirsh length 3. We split the proof in several steps.

1)
Let $F$ be the subgroup of $G$ generated by the set $$\{ t_e  \ | \  e \in A \cap (E(X) \setminus E(T)) \}.$$ Then by the definition of the fundamental group of a graph of groups $F$ is a free group with a free basis $\{ t_e  \ | \  e \in A \cap (E(X) \setminus E(T)) \}$.
Let
$$K \rtimes F$$
be the semi-direct product where  the conjugation action of $t_e$  on $K$ is given by $\widetilde{\theta}_e$. Then there is a group homomorphism
$$\gamma: G \to K \rtimes F$$
whose restriction on each $G_v$ is $\widetilde{\rho}_v$  and  $\gamma(t_e) = t_e$ for each $e \in A \cap (E(X) \setminus E(T))$.

Set $$F_0 = Ker(\gamma).$$ By construction $F_0$ intersects trivially each vertex group $G_v$, hence by standard Bass-Serre theory $F_0$ is a free group. 

2) We can suppose that for each $v \in V(X)$ the monomorphism $\widetilde{\rho}_{v}$ is an inclusion map i.e. $G_v$ is a subgroup of finite index in $K$. Furthermore since in $G$ for $e \in A \cap E(T)$ we have that $G_e = G_{o(e)} \cap G_{\tau(e)}$ we can assume that $\widetilde{\theta}_e$ and $\phi_e$ are the identity maps in this case.

Consider the subgroup of $G$ defined by 
$$D = \cap_{e \in A} G_e$$ Note that $D$ has finite index in $G_e$, hence in $K$. Let $s > 0$ be such a natural number that 
$$\langle g^s \ | \ g \in K \rangle = K^s \subseteq D$$ Since $\widetilde{\theta}_e$ is an automorphism of $K$, it is an automorphism of $K^s$.  
We set $H = K^s$ a subgroup of $G$.

Let $ e \in A \cap (E(X) \setminus E(T))$. Note that $H = K^s \leq G_e \leq K$. Since $\widetilde{\rho}_{o(e)}$ is the inclusion map we have that the restriction of $\widetilde{\theta}_e \in Aut(K)$ on $G_e$ is the map $\phi_e$ ( and this is conjugation with $t_e$ in $G$). Thus in the group $G$ we have 
$$t_e H t_e^{-1} = t_e (K^s) t_e^{-1}  = \phi_e( K^s) = \widetilde{\theta}_e (K^s) =  \widetilde{\theta}_e(K)^s = K^s = H$$
Thus we conclude that $H$ is invariant under conjugation with $t_e$. By construction $H$ is a subgroup of each $G_e$, hence of every vertex group $G_{o(e)}$.
Note that $H =K^s$ is a characteristic subgroup of $K$, hence normal in $K$ and hence in each $G_{o(e)}$. Since the graph is connected  we deduce that
 $H$ is a normal subgroup of $G$.

Note that this argument shows that $H$ is normal in $K \rtimes F$ too. Thus $\gamma$ induces a homomorphism $G/ H \to  (K/ H) \rtimes F$, where $K/ H$ is finite.

Since $F_0 \cap H  \subseteq F_0 \cap G_e = 1$ and both $F_0$ and $H$ are normal in $G$ they commute i.e. $[F_0, H] = 1$, thus $H F_0 = H \times F_0$

3) Define the following subgroup of $G$
\begin{equation} \label{recall} \widetilde{G} = (H \times F_0) \rtimes F = H \rtimes \widetilde{F} \end{equation}
where $\widetilde{F} = F_0 \rtimes F$.
Since $K/ H$ is finite, $\widetilde{G}$ is a subgroup of {\bf finite} index in $G$.

We claim that $$\widetilde{F}  \hbox{ is free.}$$ It suffices to show that $\widetilde{F} \cap G_v = 1$ for every vertex $v$.
Suppose $g \in \widetilde{F} \cap G_v$, $g = x_0 x$, where $x_0 \in F_0,x \in F$. Since $g \in G_v$ we have $\gamma (g) \in K$ but $\gamma(g) = \gamma(x_0) \gamma(x) = \gamma(x) = x \in F$. Since $K \cap F = 1$ we get $x \in F \cap K = 1$, so $g = x_0 x = x_0 \in G_v \cap F_0 = 1$.

4)  We claim that $\widetilde{G}$ is self-similar. Indeed $\widetilde{G}$ has finite index in $G$, hence is finitely generated and its quotient $\widetilde{F}$ is finitely generated free group. By construction $H$ is torsion-free and  commensurable with the Heisenberg group. But since every subgroup of finite index in a Heisenberg group contains a subgroup of finite index that is isomorphic to the Heisenberg group we deduce that there is a subgroup of finite index $H_1$ of $H$ such that $H_1$ is isomorphic to the Heisenberg group. Since there are only finitely many subgroups of $H$ of index $[H: H_1]$ there is a subgroup of finite index $F_1$ of $\widetilde{F}$ that normalizes $H_1$ and $$G_1  = H_1 \rtimes F_1 \hbox{ is a subgroup of finite index in }G$$
By Theorem \ref{Heisenberg} $G_1$ is self-similar, hence $G$ is self-similar. Indeed any finite set of virtual endomorphisms of $G_1$ that shows that $G_1$ is self-similar ( i.e. satisfy Theorem \ref{Brazil0} for the group $G_1$) can be viewed as virtual endomorphisms of $G$, hence $G$ is self-similar.

$ii) \implies iii), iv)$  It remains to show that if $G$ is residually finite then it is linear over $\mathbb{Z}$ and subgroup separable.  Recall equation ( \ref{recall}) and $H$ commensurable with a Heisenberg group, $F$ acts on $H$ via $\{ \widetilde{\theta}_e \}$. Note that $H$ is contained in all edge groups and is normal in $G$. Then by  Theorem \ref{greek2} $G$ is linear over $\mathbb{Z}$ and subgroup separable. 

$ iv) \implies ii)$ holds for an arbitrary group.

 \medskip

  {\bf Proof of Theorem \ref{main2}}
  The proof is  the same as the proof of Theorem \ref{main3}  but we use Proposition \ref{split1} instead of Theorem \ref{Heisenberg}.
   
   \medskip

  {\bf Proof of Theorem \ref{main1} }
  $i) \implies ii)$ Every self-similar group is residually finite.
  
  $ii) \implies i)$ By Theorem \ref{L-S} $G$ is either virtually $\mathbb{Z}^n$-by-free or an ascending HNN extension of $\mathbb{Z}^n$. In the former case we can apply Proposition \ref{split1} to deduce that $G$ is self-similar. 
  
In the latter case we observe that $G = A \rtimes Q$, $Q \simeq \mathbb{Z}$ and $A$ is  a strictly  increasing union of copies of $\mathbb{Z}^n$. We claim that $A$ has Krull dimension 1 as a $\mathbb{Z} Q$-module where $Q$ acts on $A$ by conjugation,  hence by Theorem \ref{K-S} $G$ is self-similar since $Q$ does not centralizes $A$. 

To prove that  $Krull dim (A) = 1$ note that $$Krull dim (A) = Krull dim (\mathbb{Z} Q/ I)$$ where $I$ is the annihilator of $A$ in $\mathbb{Z} Q$. Since $0$ is a prime ideal of $\mathbb{Z} Q$ and $I \not= 0$ we have that $$Krull dim (\mathbb{Z} Q/ I) \leq Krull dim (\mathbb{Z} Q) - 1 = 1$$ If $Krull dim (\mathbb{Z} Q/ I) = 0$ then all prime ideals $P$ of $\mathbb{Z} Q$ such that $I \subseteq P$ should be maximal. But any maximal ideal of $\mathbb{Z} Q$ has finite index in $\mathbb{Z} Q$ ( as an additive subgroup). Such a fact is true even for more general groups as polycyclic groups ( see  \cite{Ros}). Since $\mathbb{Z} Q$ is a Noetherian ring by standard commutative algebra argument there are only finitely many prime ideals $P_1, \ldots, P_m$ minimal above $I$ and $$I \subseteq \sqrt{I} = P_1 \cap \ldots \cap P_m$$ In particular $\sqrt{I}$ has finite index in $\mathbb{Z} Q$, hence $I$ has finite index in $\mathbb{Z} Q$. And since $A$ is finitely generated as a $\mathbb{Z} Q$-module we conclude that $A$ is finite, a contradiction.
  
 $ii) \implies iii)$
  
1) If  $G$ is  virtually $\mathbb{Z}^n$-by-free we claim that $G$ is linear over $\mathbb{Z}$. Indeed let $\widetilde{G} = H \rtimes  \widetilde{ F}$ be a subgroup of finite index in $G$, where $ \widetilde{ F}$ is a finite rank free group and $H \simeq \mathbb{Z}^n$. Then $ \widetilde{ F}$ acts on $H$ by conjugation. 

Let $F_0$ be the kernel of this action, hence $F_0$ is a normal subgroup of $\widetilde{G}$. Then $\widetilde{G}/ F_0 \leq \mathbb{Z}^n \rtimes GL_n(\mathbb{Z})$.  The group $\mathbb{Z}^n \rtimes GL_n(\mathbb{Z})$ embeds in $GL_{n+1}(\mathbb{Z})$ where the matrix $A = ( a_{ij}) \in GL_n(\mathbb{Z})$ is represented by a matrix $B = (b_{ij})$  defined by $b_{ij} = a_{ij}$ for $i,j \leq n$, $b_{n+1,i} = 0 = b_{i, n+1}$ for $ 1 \leq i \leq n$ and $b_{n+1, n+1} = 1$ and $h = (z_1, \ldots, z_n)$ is represented by a matrix with only 1 on the main diagonal, the last column is $(z_1, \ldots, z_n, 1)^t$ and the other entries are 0. 
  
  On the other hand $\widetilde{G} / H \simeq \widetilde{F}$ is free, hence it is linear over $\mathbb{Z}$ i.e. embeds in $GL_m(\mathbb{Z})$ for appropriate $m$ and $\widetilde{G} / F_0 $ embeds in $ H \rtimes GL_n(\mathbb{Z})$, which by the above embeds in $GL_{n+1}(\mathbb{Z})$. Since $H \cap 
  F_0  = 1$ we deduce that $\widetilde{G} = H \rtimes \widetilde{F}$ embeds in $(\widetilde{G}/ H) \times (\widetilde{G} / F_0) $ that itself embeds in $GL_m(\mathbb{Z}) \times GL_{n+1}(\mathbb{Z}) \leq GL_{m + n+1}(\mathbb{Z})$. Then $V = \mathbb{Z}^{m+n+1}$ is $\widetilde{G}$-module and the induced $G$-module $\mathbb{Z} G \otimes_{\mathbb{Z} \widetilde{G}} V  = \mathbb{Z}^k$ for $k = [G : \widetilde{G}] (m+ n+1)$ gives an embedding of $G$ in $GL_k(\mathbb{Z})$.
  
2) If $G$ is an ascending HNN extension of $\mathbb{Z}^n$ we claim that $G$  is linear over $\mathbb{Q}$. Indeed  $G = \langle A \simeq \mathbb{Z}^n, t \ | \ t a t^{-1} = \varphi (a), a \in A \rangle
$ where $\varphi : A \to A$ is a monomorphism. If $\varphi$ is surjective then by the previou paragraph $G$ is linear over $\mathbb{Z}$. If $\varphi$ is not surjective we have that $B = \cup_{i \geq 0} t^{-i} A t^i$ is a nested increasing union  where each $t^{-i} A t^i \simeq A \simeq \mathbb{Z}^n$, hence $B$ is a normal subgroup of $G$ and $B$  is embeddable in $\mathbb{Q}^n$, in particular $G$ is embeddable in $\mathbb{Q}^n \rtimes \mathbb{Z}$, that is embeddable in $\mathbb{Q}^n \rtimes GL_n(\mathbb{Q})$. By the argument of the previous paragraph with $\mathbb{Z}$ substituted by $\mathbb{Q}$ we have that $\mathbb{Q}^n \rtimes GL_n(\mathbb{Q})$ is embeddable in $ GL_{n+1}(\mathbb{Q})$.

$iii) \implies ii)$ By Malcev \cite{Mal}  every finitely generated group that is  linear  over a commutative ring is residually finite.

\section{Ascending HNN extension of the Heisenberg group is self-similar}

{\bf Proof of Theorem \ref{Heisenberg2}}

 $G = \langle N, t \ | \ ^t g = \varphi (g) \hbox{ for } g \in N \rangle$ where  $N = \langle x, y \rangle$ is the Heisenberg group and  $\varphi : N \to N$ is a monomorphism. Since the case when $\varphi$ is an automorphism  was resolved in Theorem \ref{Heisenberg} we can assume that $\varphi$ is not surjective. Note that $\varphi(N)$ has finite index in $N$.
 
  Let $$\varphi(x) \in x^{a_1} y ^{b_1} Z \hbox{ and } \varphi(y) \in x^{a_2} y ^{b_2} Z,$$ where $Z = \langle z \rangle$  is the centre of $N$, $[x,y] = z$ and   $A = \begin{bmatrix}
 a_1 & a_2 \\
 b_1 & b_2 \\
 \end{bmatrix}$ is $2 \times 2$ matrix with integer coefficients.
 
  Since $\varphi(N) \not= N$ we have that $det(A) \not= \pm 1$.
 
 \medskip
 {\bf Claim 1}      There exists a subgroup of finite index $\langle x_1, y_1 \rangle$ in $N$, an integer $k \geq 1$  and a positive  odd prime $p$ where     $2 det(A)$ divides $p-1$  such that for  $N_0 = \langle x_1, y_1 \rangle$, $N_1 = \langle x_1^p, y_1^p \rangle$ we have that $\varphi^k (N_0) \subseteq N_0, \varphi^k (N_1) \subseteq N_1$.

 \medskip
{\bf Proof of the claim.}  
1)  Suppose  $det(A) - tr(A) + 1 \not= 0$ and consider  $x_1 = x z ^{\alpha}, y_1 = y z^{\beta}$ for  $\alpha, \beta \in \mathbb{Z}$. We will specify $\alpha$ and $\beta$ later.

Since $\varphi(x^p) = \varphi(x)^p $, $\varphi(y^p) = \varphi(y)^p$ we can write $$\varphi(x^p) = x^{pa_1} y^{pb_1} z^{pc_1}, \varphi(y^p) = x^{pa_2} y^{pb_2} z^{pc_2}$$ for some $c_i \in \mathbb{Z}$.
 Then using that $\varphi(z) = \varphi([x,y]) = z^{det(A)}$ we have
 \begin{equation} \label{q1} \varphi((x z ^{\alpha})^p) = \varphi( x^p z^{\alpha p}) = \varphi(x^p) \varphi(z)^{\alpha p} = x^{pa_1} y^{pb_1} z^{pc_1} z^{ \alpha p . det(A)} \end{equation}
 and 
  \begin{equation} \label{q2} \varphi((y z ^{\beta})^p) = \varphi( y^p z^{\beta p}) = \varphi(y^p) \varphi(z)^{\beta p} = x^{pa_2} y^{pb_2} z^{pc_2} z^{ \beta p . det(A)} \end{equation}

  Note that (\ref{q1}), (\ref{q2}) and the fact that $ [x_1^p,y_1^p]  = z^{p^2}$ implies that $\varphi((x z ^{\alpha})^p), \varphi((y z ^{\beta})^p) \in N_1$ if and only if
 $$
 \varphi((x z ^{\alpha})^p) \in (x z^{\alpha})^{p a_1} (y z^{\beta})^{p b_1} Z^{p^2} = 
 x^{p a_1} y^{p b_1} z^{\alpha p a_1 + \beta p b_1} Z^{p^2} 
 $$ 
 and 
 $$
 \varphi((y z ^{\beta})^p) \in (x z^{\alpha})^{p a_2} (y z^{\beta})^{p b_2} Z^{p^2} = 
 x^{p a_2} y^{p b_2} z^{\alpha p a_2 + \beta p b_2} Z^{p^2} 
 $$
 This is equivalent to $$\alpha  a_1 + \beta  b_1 - \alpha. det(A) = c_1 \ ( \ mod \ p ), \alpha  a_2 + \beta  b_2 - \beta . det(A) = c_2 \ ( \ mod \  p )$$
 This is a congruence system with variables $\alpha, \beta$ and determinant  $$(a_1 - det(A)) (b_2 - det(A)) - a_2 b_1 = a_1 b_2 - a_2 b_1 - (a_1 + b_2) det(A) + det(A)^2 =$$ $$ det(A)^2 - tr(A) det(A)  + det(A) = det(A) ( det(A) - tr(A) + 1)$$
 
  We can choose $p$ such that $$det(A) ( det(A) - tr(A) + 1) \not\eq 0 ( \ mod \ p)$$ and can always resolve the congruence system. In this case we define $k = 1$.

2)  Finally if $det(A) - tr(A) + 1 = 0$  then for $\lambda_1, \lambda_2$ the e-values of $A$ we have $(\lambda_1 - 1) ( \lambda_2 - 1) =\lambda_1 \lambda_2 - ( \lambda_1 + \lambda_2) + 1 = 0$. Then $\lambda_1 = 1$ or $\lambda_2 = 1$, say $\lambda_1 = 1$.
 Then there is a subgroup of finite index $N_0 = \langle x_0, y_0 \rangle \leq N$
such that
$$\varphi(x_0) = x_0 z^{d_1},  \  \varphi(y_0) = y_0^{\lambda} z^{d_2}$$ where $\lambda = \lambda_2, d_1, d_2 \in \mathbb{Z}$. Let $z_0 = [x_0, y_0] = z^m$ for some $m \in \mathbb{Z}$. Note that $\varphi(x_0)$ and $\varphi(y_0)$ are not necessary elements of $N_0$.

 Let $p$ be a prime  and set $k = p |m|$.

 Note that  since $\varphi(z) = \varphi([x,y]) = z^{det(A)} = z^{\lambda}$ for every $s \geq 1$ we have $\varphi^s(y_0) = y_0^{\lambda^s} z^{s d_2 \lambda^{s-1}}$,in particular $$\varphi^k(y_0) = y_0^{\lambda^k} z^{p| m| d_2 \lambda^{k-1}} =  y_0^{\lambda^k} z_0^{ \pm p  d_2 \lambda^{k-1}}  \in N_0$$ Similarly   $$\varphi^k(x_0)= x_0 z^{p |m| d_1 } = x_0 z_0^{ \pm p d_1} \in N_0$$

 Then if we substitute $N$ with $N_0$ i.e. $x$ with $x_0$, $y$ with $y_0$,$z$ with $z_0$, we substitute $A$ with $A^k$ and $\varphi$ by $\varphi^k$   we can solve the congruence system from case 1)  for $a_1 = 1, b_1 = 0, c_1 = \pm p d_1 ,  a_2 = 0, b_2 = \lambda^k $, $c_2 = \pm p d_2 \lambda^{k-1} = 0 \ ( \ mod \ p \ )$
 as the congruence system in this case is
 $$\alpha  ( 1 - det(A^k)) \equiv c_1 \equiv 0 \ ( \ mod \ p ), \beta.0 \equiv c_2 \equiv 0 \ ( \ mod \  p )$$
 Thus we can take $\alpha = \beta = 0$ and $x_1 = x_0, y_1 = y_0$. 

  In both cases 1) and 2)  we can resolve the congruence system but in case 1) we have to avoid  finitely many primes $p$. Finally we need one odd prime $p$ such that $2 det(A)$ divides $p-1$ but by the Dirichlet Theorem the arithmetic progression $\{ 2 det(A) s+1 \ | \ s \in \mathbb{Z}, det(A) s > 0 \}$, always contains infinitely many prime numbers $p$. Hence there is a prime $p$ that satisfies Claim 1.
 This completes the proof of Claim 1.
 
 \medskip
 {\bf Claim 2} {\it Suppose $K$ is a subgroup of finite index in $N$ such that $\varphi^k(K) \subseteq K$ for some $k \geq 1$
 . Then the subgroup $G_K$ of $G = \langle N, t \rangle$ generated by $K$ and $t^k$ has finite index in $G$. In particular if $G_K$ is transitive self-similar then $G$ is transitive self-similar.}
 
 \medskip
 {\bf Proof}  Let $M$ be the normal closure of $N$ in $G$. Then $M$ is inside the Malcev completion of $N$, $M$ is a torsion-free nilpotent group of nilpotent class 2 and since $\varphi(z) = z^{det(A)}$ we have in additive notation that  $Z(M) \simeq \mathbb{Z}[\frac{1}{det(A)}]$ where $z$ corresponds to $1 \in \mathbb{Z}$.  Note that $M/ Z(M) \simeq \cup_{i \geq 1} A^{-i} \mathbb{Z}^2 = \cup_{i \geq 1} A^{-ik} \mathbb{Z}^2$.
 
 We set $M_1 = G_K \cap  M$. To complete the proof it suffices to show that $M_1$ has finite index in $M$.
 
 Since $K$ has finite index in $N$  we have that  $(m \mathbb{Z})^2 \leq K/ (Z(N) \cap K) \leq  N/ Z(N) =\mathbb{Z}^2 $ for some integer $m \geq 1$. Then the image of $M_1$ in $M/ Z(M)$ contains
 $ \cup_{i \geq 1} A^{-ik} (m\mathbb{Z})^2$. Since $A^{-ik} (m\mathbb{Z})^2$ has index $m^2$ in 
 $ A^{-ik} \mathbb{Z}^2$ we conclude that  $ \cup_{i \geq 1} A^{-ik} (m\mathbb{Z})^2$ has index
 at most $m^2$ in $\cup_{i \geq 1} A^{-ik} \mathbb{Z}^2 \simeq M/ Z(M)$. Thus the image of $M_1$ in $M/ Z(M)$ has finite index in  $M/ Z(M)$.
 
  Since $K$ has finite index in $N$  we have that $Z(N) \cap K =\langle z^s \rangle$ for some integer $s \geq 1$. Then $Z(M) \cap M_1$ contains  $\cup_{i \geq 1} \langle ^{t^{ik}} z^s \rangle$. Moving to additive notation $s \mathbb{Z}[\frac{1}{det(A)^k}] \leq Z(M) \cap M_1 \leq Z(M) = \mathbb{Z}[\frac{1}{det(A)}] = \mathbb{Z}[\frac{1}{det(A)^k}]$. But for any integer
$r \not= 0$ we have that $[\mathbb{Z}[\frac{1}{r}] :  s \mathbb{Z}[\frac{1}{r}]] \leq s$. Hence 
 $Z(M) \cap M_1$ has finite index in $Z(M)$.
 
Finally if $G_K$ is transitive self-similar then there is corresponding simple virtual endomorphism of $G_K$, that can be considered as a simple virtual endomorphism of $G$ and we deduce that $G$ is transitive self-similar.
This completes the proof of Claim 2.

 \medskip

 {\bf Claim 3} $G$ is  a transitive self-similar group.
 
 {\bf Proof of Claim 3}
 Consider  the group $N_0 = \langle x_1, y_1 \rangle$ and the integer $k \geq 1$ given by Claim 1.  By Claim 2 applied for $K = N_0$ it suffices to consider the case when $k = 1$ and $N_0 = N$ i.e. $x_1, y_1$ are generators of the Heisenberg group $N$ with $[x_1, y_1] = z$ a generator of $Z(N)$ and $\varphi(N) \subseteq N$ and $\varphi(\langle x_1^p, y_1^p \rangle) \subseteq \langle x_1^p, y_1^p \rangle = N_1$.
 
 Let $G_1$ be the subgroup of $G$ generated by $x_1^p, y_1^p$ and $t$. By Claim 2 $G_1$ has finite index in $G$.

 We claim that there is a simple  virtual endomorphism $$f : G_1 \to G$$ given by $$f(x_1^p) = x_1, f(y_1^p) = y_1, f(t) = tu$$
  for some $u \in M$. We have to check $f$ is well-defined i.e. \begin{equation} \label{q10}
 ^{f(t)} f(x_1^p) = f(^t x_1^p) \hbox{ and }  ^{f(t)} f(y_1^p) = f(^t y_1^p) \end{equation}
 Indeed $ \ ^t ( x_1^p) \in ^t N_1 = \varphi(N_1)$, hence $$^t (x_1^p) = x_1^{p \gamma_{11}} y_1^{p \gamma_{12}} z^{p^2 \gamma_{13}}, \hbox{ similarly } ^t (y_1^p) = x_1^{p \gamma_{21}} y_1^{p \gamma_{22}} z^{p^2 \gamma_{23}} \hbox{ for some } \gamma_{ij} \in \mathbb{Z}$$
 Hence $$^t x_1 = x_1^{\gamma_{11}} y_1^{\gamma_{12}} z_1 \hbox{ and } ^t y_1 = x_1^{\gamma_{21}} y_1^{\gamma_{22}} z_2, \hbox{ for some } z_1, z_2 \in \langle z \rangle.$$

  Note that $[x_1, y_1] = z$, so
 $$x_1^{p \gamma_{11}} y_1^{p \gamma_{12}} z^{p^2 \gamma_{13}} = ^t ( x_1^p) = (^t x_1)^p =
 (x_1^{\gamma_{11}} y_1^{\gamma_{12}} z_1)^p = x_1^{p \gamma_{11}} y_1^{p \gamma_{12}} z^{- \gamma_{11} \gamma_{12} \frac{p(p-1)}{2}} z_1^p$$
 This implies that \begin{equation} \label{eqeq7} z^{p \gamma_{13}+ \gamma_{11}\gamma_{12} \frac{(p-1)}{2}} = z_1 \end{equation} 
 Similarly
 $$ x_1^{p \gamma_{21}} y_1^{p \gamma_{22}} z^{p^2 \gamma_{23}} = ^t (y_1^p) = (^t y_1)^p =
 (x_1^{\gamma_{21}} y_1^{\gamma_{22}} z_2)^p = 
 x_1^{\gamma_{21} p} y_1^{\gamma_{22} p} z^{- \gamma_{21} \gamma_{22} \frac{p (p-1)}{2}} z_2^p
 $$
 This implies that \begin{equation} \label{eqeq8} z^{p \gamma_{23}+ \gamma_{21}\gamma_{22} \frac{(p-1)}{2}} = z_2 \end{equation}  
 
 Then $$f(^t x_1^p) = f( x_1^{p \gamma_{11}} y_1^{p \gamma_{12}} z^{p^2 \gamma_{13}}) = 
  f( x_1^{p \gamma_{11}}) f( y_1^{p \gamma_{12}}) f( z^{p^2 \gamma_{13}}) =   x_1^{ \gamma_{11}}  y_1^{ \gamma_{12}}  z^{ \gamma_{13}} $$
  and
  $$f(^t y_1^p) = f( x_1^{p \gamma_{21}} y_1^{p \gamma_{22}} z^{p^2 \gamma_{23}}) = 
  f( x_1^{p \gamma_{21}}) f( y_1^{p \gamma_{22}}) f( z^{p^2 \gamma_{23}}) =   x_1^{ \gamma_{21}}   y_1^{ \gamma_{22}} z^{ \gamma_{23}}$$
  On the other hand
  $$^{f(t)} f(x_1^p)  = ^{tu} x_1    = ^t x_1 (^{t} (x_1^{-1} u x_1 u^{-1}))   = x_1^{\gamma_{11}} y_1^{\gamma_{12}} z_1 (^{t} (x_1^{-1} u x_1 u^{-1}))  $$
  and
   $$^{f(t)} f(y_1^p)  = ^{tu} y_1    = ^t y_1 (^{t} (y_1^{-1} u y_1 u^{-1}))   = x_1^{\gamma_{21}} y_1^{\gamma_{22}} z_2 (  ^{t} (  y_1^{-1} u y_1 u^{-1}))  $$
   Hence (\ref{q10}) is equivalent to
   \begin{equation} \label{eqeq9} x_1^{-1} u x_1 u^{-1} = ^{t^{-1}} (z_1^{-1} z^{\gamma_{13}})  =  (z_1^{-1} z^{\gamma_{13} })^{1/ det(A)} \end{equation}  and \begin{equation} \label{eqeq10}   y_1^{-1} u y_1 u^{-1} = ^{t^{-1}} (z_2^{-1} z^{\gamma_{23} })  =  (z_2^{-1} z^{\gamma_{23} }) ^{1/ det(A)} .\end{equation}
 By  (\ref{eqeq7})   and since $det(A)$ divides $(p-1)/2$ we get
\begin{equation}  \label{eqeq11}
(z_1^{-1} z^{\gamma_{13} })^{\frac{1}{det(A)}} = (z^{(1-p) \gamma_{13}- \gamma_{11}\gamma_{12} \frac{(p-1)}{2} })^{\frac{1}{det(A)}} \in \langle z \rangle
\end{equation}
Similarly by (\ref{eqeq8})
\begin{equation} \label{eqeq12}
(z_2^{-1} z^{\gamma_{23} })^{\frac{1}{det(A)}} = (z^{(1-p) \gamma_{23}- \gamma_{21}\gamma_{22} \frac{(p-1)}{2} })^{\frac{1}{det(A)}} \in \langle z \rangle
\end{equation}

  Take $u = x_1^{\alpha_0} y_1^{\beta_0}$,so
    \begin{equation} \label{eqeq13} x_1^{-1} u x_1 u^{-1} = [x_1, u^{-1}] = [x_1, y_1^{- \beta_0}] = z^{- \beta_0}\end{equation} and \begin{equation} \label{eqeq14} y_1^{-1} u y_1 u^{-1} =  [y_1, u^{-1}] = [y_1, x_1^{- \alpha_0}] = z^{ \alpha_0} \end{equation}
   
 Then by (\ref{eqeq9}),(\ref{eqeq10}),(\ref{eqeq11}),(\ref{eqeq12}),(\ref{eqeq13}) and (\ref{eqeq14})   $f$ will be well-defined if we take    
 $$
 \alpha_0 = \frac{1}{det(A)}( (1-p)\gamma_{23} - \gamma_{21}\gamma_{22} \frac{(p-1)}{2} ) \in \mathbb{Z}
 $$
 and 
 $$\beta_0 = \frac{1}{det(A)}(- (1-p)\gamma_{13} + \gamma_{11}\gamma_{12} \frac{(p-1)}{2} )
   \in \mathbb{Z}$$

 Let $M_1$ be the normal closure of $N_1$ in $G_1$.  Note that $f |_{M_1} : M_1 \to M$ is injective, hence $f$ is injective. 
 
 We claim that $f$ is a simple virtual endomorphism. If not  there 
 is a non-trivial normal subgroup $K$ of $G$ such that $K \subseteq G_1$ and  $f(K) \subseteq K$. Then $f(K \cap M) \subseteq K \cap M$.
 
 Suppose $g \in K \cap Z(M_1) \leq Z(M_1) \simeq p^2 \mathbb{Z} [\frac{1}{det(A)}]$ in additive notation. Then $f(g) = \frac{g}{p^2}$  and $p$ does not divide $det(A)$ implies that if $g \not= 0$ ( in additive notation) then for some sufficiently big $s \geq 1$ we have $f^s(g) \not\in Z(M_1)$ a contradiction. Thus $K \cap Z(M_1)$ is the trivial group.
 
 Note that $f$ induces a map $\widetilde{f} : W =\frac{M_1}{Z(M_1)} \to \frac{M}{Z(M)} = V$ that is nothing else than division by $p$ i.e. multiplication by $1/p$. Recall that
 $$V =  \cup_{i \geq 1} A^{-i} \mathbb{Z}^2  \subseteq  \cup_{i \geq 1} (\frac{1}{det(A)})^{i} \mathbb{Z}^2    \hbox{ and } W =    \cup_{i \geq 1} A^{-i} (p\mathbb{Z})^2 $$
 Since $det(A)$ is not divisible by $p$ we conclude that the unique element $w$  of $W$ such that $f^s(w)$ is defined for all $s \geq 1$ is the trivial one i.e. in additive notation $w = 0$. Thus  $K \cap M \subseteq K \cap Z(M_1)$ is the trivial group. 
 
 Finally since $K \cap M = 1$ and both $K$ and $M$ are normal in $G$ we conclude that $[K, M] = 1$. Furthermore $K \simeq KM/ M \leq G/ M \simeq \mathbb{Z}$, so $K$ is cyclic. Thus $M \times K $ is a subgroup of finite index in $G$, say index $s$. Hence $\varphi^s$ is the identity, hence $det(A)^s =1$ and $det(A) \in \mathbb{Z}$ implies $det(A) = \pm 1$, a contradiction. This completes the proof of the Claim 3.


\begin{thebibliography}{99}

\bibitem{Ba-S}  L. Bartholdi and  Z. Sunik,  Some solvable automaton groups
Contemp. Math., 394,
American Mathematical Society, Providence, RI, 2006, 11–29



\bibitem{B-S} G. Baumslag and D. Solitar, 
Some two-generator one-relator non-Hopfian groups.
Bull. Amer. Math. Soc. 68 (1962), 199–201

\bibitem{But} J. Button, Generalised Baumslag-Solitar groups and hierarchically hyperbolic groups.
arXiv preprint arXiv:2208.12688, 2022




\bibitem{B-I-W}   
K-U. Bux, C. Llosa Isenrich, and X. Wu,
On the Boone--Higman Conjecture for groups acting on locally finite trees, arXiv:2408.05673


\bibitem{Brazil} A. C. Dantas, T. M. G. Santos, and S. N. Sidki,
Intransitive self-similar groups. J. Algebra 567(2021), 564–581


\bibitem{Brazil2} A. C. Dantas and S. Sidki, On self-similarity of wreath products of abelian groups. Groups Geom. Dyn. 12 (2018), no. 3, 1061–1068  

\bibitem{D-S} C. Drutu and M. Sapir,   Non-linear residually finite groups,
J. Algebra 284 (2005), no. 1, 174–178  

\bibitem{G-M} 
Y. Glasner and S. Mozes, Automata and square complexes, Geom. Dedicata 111 (2005) 43–64 


\bibitem{For} M. Forester. On uniqueness of JSJ decompositions of finitely generated groups. Comment.
Math. Helv., 78(4):740–751, 2003



\bibitem{K-Si} D. H. Kochloukova and S. N.  Sidki, Self-similar groups of type FPn. Geom. Dedicata 204 (2020), 241 - 264

 \bibitem{K-S} D. Kochloukova and M. Souza, Non self-similar metabelian groups, preprint
 

 
 
 
 
  \bibitem{L-S}  J. Lopez de Gamiz Zearra and S. Shepherd,  Separability properties of higher rank GBS groups,
Bull. Lond. Math. Soc. 57 (2025), no. 4, 1171 - 1194



\bibitem{Lev1} G. Levitt,   Generalized Baumslag-Solitar groups: rank and finite index subgroups,
Ann. Inst. Fourier (Grenoble) 65 (2015), no. 2, 725–762

\bibitem{Lev2} G. Levitt,  Quotients and subgroups of Baumslag-Solitar groups,
J. Group Theory 18 (2015), no. 1, 1–43




\bibitem{Sidki} N. Gupta and S. Sidki,  On the Burnside problem for periodic groups. Math. Z. 182 (1983), no. 3, 385–388

\bibitem{G}  R. I. Grigorchuk, On Burnside's problem on periodic groups. (Russian) Funktsionalyi Analiz i ego Prilozheniya, vol. 14 (1980), no. 1, pp. 53 - 54

\bibitem{Hall} P. Hall,  On the finiteness of certain soluble groups,
Proc. London Math. Soc. (3) 9 (1959), 595 - 622 



\bibitem{H-W} T. Hsu and D. Wise,
Ascending HNN extensions of polycyclic groups are residually finite,
J. Pure Appl. Algebra 182 (2003), no. 1, 65 - 78


\bibitem{L-M} 
 I. J. Leary and A. Minasyan,  Commensurating HNN extensions: nonpositive curvature and biautomaticity,
Geom. Topol. 25 (2021), no. 4, 1819 - 1860

\bibitem{M} D. I. Moldavanskii,
Residual finiteness of descending HNN-extensions of groups,
Ukraïn. Mat. Zh. 44 (1992), no. 6, 842–845; translation in
Ukrainian Math. J. 44 (1992), no. 6, 758–760

\bibitem{Mal} A. Malcev, 
On isomorphic matrix representations of infinite groups,
Rec. Math. [Mat. Sbornik] N.S. 8/50 (1940), 405–422





\bibitem{M-R-V2} E. Raptis, O. Talelli, and D. Varsos,   On finiteness conditions of certain graphs of groups
, Internat. J. Algebra Comput. 5 (1995), no. 6, 719–724

\bibitem{N-S} 
V. Nekrashevych and S. Sidki, Automorphisms of the binary tree: state-closed subgroups and dynamics of 1/2-endomorphisms. Groups: topological, combinatorial and arithmetic aspects, 375 - 404, London Math. Soc. Lecture Note Ser., 311, Cambridge Univ. Press, Cambridge, 2004

\bibitem{book} V. Nekrashevych. Self-similar groups, volume
117 of Mathematical Surveys and Monographs. American
Mathematical Society, Providence, RI, 2005.

\bibitem{O} M. Olivier,  Which Nilpotent Groups are Self-Similar?, arXiv:2101.11291 

\bibitem{R-T-V} E. Raptis, O. Talelli, and D. Varsos,  On residual finiteness of graphs of nilpotent groups,
Internat. J. Algebra Comput. 14 (2004), no. 4, 403 - 408

\bibitem{Ros} J. E. Roseblade,  Group rings of polycyclic groups, J. Pure Appl. Algebra 3 (1973), 307 - 328


\bibitem{Sa} T. M. G. Santos,  Grupos autossimilares intransitivos, PhD Thesis, University of Brasilia, 2021

\bibitem{S-V} D. Savchuk and Y. Vorobets,  Automata generating free products of groups of order 2
J. Algebra 336 (2011), 53–66.




\bibitem{SVV}  
B. Steinberg, M. Vorobets, and Y. Vorobets,  Automata over a binary alphabet generating free groups of even rank
Internat. J. Algebra Comput. 21 (2011), no. 1-2, 329–354



\bibitem{V-V} M. Vorobets and Y. Vorobets, On a series of finite automata defining free transformation groups, Groups Geom. Dyn. 4 (2)
(2010) 377–405,

\bibitem{Why} K. Whyte, The large scale geometry of the higher Baumslag-Solitar groups. Geom. Funct.
Anal., 11(6):1327–1343, 2001
\end{thebibliography}
 \end{document}